\def\Dj{\hbox{D\kern-.73em\raise.30ex\hbox{-} \raise-.30ex\hbox{}}}
\def\dj{\hbox{d\kern-.33em\raise.80ex\hbox{-} \raise-.80ex\hbox{\kern-.40em}}}
\def\<{\langle}                     
\def\>{\rangle}                     
\newtheorem{thm}{Theorem}[section]
\newtheorem{conj}[thm]{Conjecture}
\newcommand{\ben}{\begin{enumerate}}
	\newcommand{\een}{\end{enumerate}}
\theoremstyle{plain}
\newtheorem{theorem}{Theorem}[section]
\newtheorem{lemma}{Lemma}[section]
\theoremstyle{definition}
\numberwithin{equation}{section}
\begin{document}
	\title[On Induced Matching of Graphs]{On Induced Matching numbers of stacked-book graphs} 
	
	
	\author[T.C Adefokun]{Tayo Charles Adefokun$^1$ }
	\address{$^1$Department of Computer and Mathematical Sciences,
		\newline \indent Crawford University,
		\newline \indent Nigeria}
	\email{tayoadefokun@crawforduniversity.edu.ng}
	
	\author[O.L. Ogundipe]{Opeoluwa Lawrence Ogundipe $^2$}
	\address{$^2$Department of Mathematics,
		\newline \indent University of Ibadan,
		\newline \indent Nigeria}
	\email{opeogundipe2002@gmail.com}

	\author[D.O.A Ajayi]{Deborah Olayide Ajayi$^2$}
	\address{$^2$Department of Mathematics,
		\newline \indent University of Ibadan,
		\newline \indent Nigeria}
	\email{adelaideajayi@yahoo.com}

	\keywords{Stacked-book graphs, Maximum Induced Matching Number, Cartesian product Star graph paths\\
		\indent 2010 {\it Mathematics Subject Classification}. Primary: 05C70, 05C15}
	
	\begin{abstract} 
		Suppose that $G$ is a simple, undirected graph. An induced matching in $G$ is a set of edges $M$ in the edge set $E(G)$ of $G$ such that if $e_1, e_2 \in M$, then no endpoint $v_1, v_2$ of $e_1$ and $e_2$ respectively is incident to any edge $e_k \in  E(G)$ such that $e_k$ is incident to any edge in $M$. Denoted by $im (G)$, the maximum cardinal number of $M$ is known as the induced matching number of $G$. In this work, we probe $im(G)$ where $G = G_{m,n}$, which is the stacked-book graph obtained by the Cartesian product of the star graph $S_m$ and path $P_n$.  
	\end{abstract} 
	
	\maketitle		
	
	\section{Introduction}
Suppose that $G$ is a graph with $E(G)$ as the edge set of $G$ while $V(G)$ denotes the vertex set of $G$. Let $M$ be subset of $E(G)$ such that for every $e_1, e_2 \in M$ there is no such edge in $E(G)$ to which any of the end points of $e_1$ and $e_2$ are commonly adjacent. Maximum Induced matching (MIM) problem is the generalization of the older graph matching problem, and it was introduced in \cite{SV1}. 

Suppose that $M$ is the largest induced matching in $G$ then the cardinal number of $M$, denoted by $im(G)$ is called the maximum induced matching number of $G$. Many work has has been on this subject. It has attracted interest mostly because of it is theoretically interested and it has a number of direct applications. In \cite{SV1}, the authors described MIM problem as "risk free" marriage where married couples who are perfectly matched are identified. Its usefulness in cryptography is also obvious. Cameron in her earlier work \cite{C1} showed that even though the MIM problem is NP-complete for bipartite graphs, it is easier to resolve for chordal graphs. This was also confirmed for circular graph in \cite{GL1}. Golumbic and Lewenstein \cite{GL2} established that the a relationship between MIM number and redundancy number in graphs and also showed that the MIM problem is polynomial-time solvable for tree graphs, while \cite{C2} investigated the MIM problem in intersection graphs. 
Recent works on MIM problem include \cite{M1} where the MIM number was extensively probed for grids $G_{n,m}=P_n \Box P_m$, the Cartesian product of paths $P_n$ and $P_m$. For odd $nm$, a bound $im(G_{n,m}) \leq \lfloor \frac{nm+1}{4} \rfloor$ was obtained. The bound was tightened in \cite{AA2} and further in \cite{AA1}. In \cite{XT1} investigation was made into obtaining exact algorithm for MIM problem of graphs on $n-$vertices. 

In this work, we probe the maximum induced matching problem for stacked-book graph $G_{m,n}$ class which are graphs obtained from the Cartesian product of star graphs $S_m$ and paths $P_n$. The MIM numbers are obtained for the initial range of these graphs while lower bounds of MIM number are derived for the general class.

	\section{Definitions}
		To make this works self-contained, we give the following definitions, which we shall adopt in the course of the paper. Definitions that are not considered as general will be given at the point of application.
	
	The vertex set of graph $G$ is $V(G)$ and $M$ is a subset of $E(G)$, the edge set of $G$, and $M$ is the induced matching of $G$. A vertex $v \in V(G)$ is called saturated if $v \in V(G)$ and unsaturated if otherwise. A star graph $S_m$ contains a central vertex $v_1$ (except if specifically indicated otherwise) with $m-1$ leaves, which are all incident to $v_1$ as pendants. A path $P_n$ contains $n$ edges and $n-1$ paths, while a cycle $C_m$ contains $m$ vertices and $n$ edges. Supposed that $u$ and $v$ are members of $V(G)$, then $d(u,v)$ is a positive integer, which is the distance between $u$ and $v$ in $G$. A vertex $v \in V(G)$ is called unstaurable if by the virtue of its position, can not be saturated either because of its distance from a saturated vertex or it is a the the right distance but not adjacent to a vertex that can be saturated in other to form an edge in the induced matching. A saturable vertex therefore, is the opposite of an un saturable vertex. The diameter of a graph is the largest distance between any two vertices on a graph $u$ and $v$, demoted by $diam(G)$. The set $[a,b]$ denoted set of integers from $a$ to $b$ while $[a]$ is a shortened for for $[1,a]$. 
	
	\subsection{Structure of a Stacked-book graph} 
	The stacked-book graph is the Cartesian product $S_m \Box P_n$ of a star graph $S_m$ and path $P_n$. Structurally, a $S_m \Box P_n$ contains $n$ number of $S_m$ stars such that there exist the $E(G') \in E(S_m \Box P_n)$, where $E(G')=\left\lbrace v_iu_i: v_i \in V(S_m(i)); u_i \in V(S_m(i+1), i \in [n])\right\rbrace $. Clearly, $E(S_m \Box P_n)=E(G') \cup E(\cup^n_{i=1}S_m(i))$, where $S_m(i)$ is designated as the the $i$th $S_m$ star graph for all $1 \leq i \leq n$

	\subsection{Initial Results}
	The following results are obvious
	\begin{theorem}\label{thm1}
		Let $P_n$ be a path graph on $n$ vertices. Then, $im(P_n)= \lceil \frac{n-1}{3}\rceil$.
	\end{theorem}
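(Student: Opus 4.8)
The plan is to translate the induced matching condition on $P_n$ into a gap condition on edge indices, and then solve the resulting packing problem directly. Label the vertices $v_1, \dots, v_n$ and the edges $e_i = v_i v_{i+1}$ for $i \in [n-1]$. I would first establish the key combinatorial fact: two edges $e_i, e_j$ with $i < j$ can lie in a common induced matching precisely when $j - i \ge 3$. Indeed, if $j = i+1$ they share the vertex $v_{i+1}$, and if $j = i+2$ the edge $e_{i+1}$ joins an endpoint of $e_i$ to an endpoint of $e_j$; in either case they cannot both belong to an induced matching. Conversely, when $j - i \ge 3$ no vertex of $e_i$ is adjacent to a vertex of $e_j$, so nothing obstructs them. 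Hence a set $M = \{e_{i_1}, \dots, e_{i_k}\}$ with $i_1 < \cdots < i_k$ is an induced matching of $P_n$ if and only if $i_{t+1} - i_t \ge 3$ for every $t$.

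For the upper bound I would use this gap condition telescopically: $i_k - i_1 = \sum_{t=1}^{k-1}(i_{t+1}-i_t) \ge 3(k-1)$, so $i_k \ge 1 + 3(k-1) = 3k-2$. Since $i_k \le n-1$, this forces $3k - 2 \le n-1$, i.e. $k \le \frac{n+1}{3}$, and because $k$ is an integer we obtain $k \le \lfloor \frac{n+1}{3}\rfloor = \lceil \frac{n-1}{3}\rceil$. For the matching lower bound I would exhibit the explicit candidate $M = \{e_{3t+1} : 0 \le t \le k-1\}$ with $k = \lceil \frac{n-1}{3}\rceil$. Consecutive indices differ by exactly $3$, so $M$ is an induced matching by the characterization above, and the largest index used is $3(k-1)+1 = 3k-2 \le n-1$ because $k = \lfloor\frac{n+1}{3}\rfloor \le \frac{n+1}{3}$; thus every $e_{3t+1}$ is a genuine edge of $P_n$. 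This gives $im(P_n) \ge k$, and combined with the upper bound we conclude $im(P_n) = \lceil \frac{n-1}{3}\rceil$.

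The argument is elementary; the only point requiring genuine care is the characterization step, where one must resist the natural guess that a gap of $2$ suffices. The intermediate edge $e_{i+1}$ is exactly what rules out the case $j = i+2$, and getting this boundary right is what fixes the denominator as $3$ rather than $2$. Everything else is a routine gap-packing computation together with the arithmetic identity $\lfloor \frac{n+1}{3}\rfloor = \lceil \frac{n-1}{3}\rceil$, which is consistent with the result being stated as ``obvious.''
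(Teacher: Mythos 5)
Your proof is correct, and in fact it is more than the paper provides: Theorem \ref{thm1} appears in the paper under the heading ``Initial Results'' with the single remark that ``the following results are obvious,'' and no proof is given at all. So there is no argument of the paper's to compare yours against; what you have written is a complete, rigorous substitute for an omitted folklore proof. Your three steps all check out. The characterization that $e_i$ and $e_j$ ($i<j$) are compatible in an induced matching exactly when $j-i\geq 3$ is right, including the subtle case $j=i+2$ excluded by the intermediate edge $e_{i+1}$, and since the induced matching condition is a pairwise condition, the characterization does extend from pairs to whole sets as you use it. The telescoping bound $i_k \geq 3k-2 \leq n-1$ gives $k \leq \lfloor\frac{n+1}{3}\rfloor$, the explicit family $\{e_1, e_4, \dots, e_{3k-2}\}$ attains it, and the identity $\lfloor\frac{n+1}{3}\rfloor = \lceil\frac{n-1}{3}\rceil$ is a standard floor--ceiling conversion (write $n-1 = 3q+r$ and check $r \in \{0,1,2\}$). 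The only degenerate case, $n=1$ where the path has no edges and both sides equal $0$, is handled by the empty matching, though you could say so explicitly since the telescoping step tacitly assumes $k \geq 1$. One stylistic note: the paper's definitions section contains a garbled description of $P_n$ (``$n$ edges and $n-1$ paths''), but the theorem statement itself says $P_n$ has $n$ vertices, which is the convention you correctly adopted; your count of $n-1$ edges is the right reading, and it is the one under which the stated formula holds.
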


	\begin{theorem} \label{thm2}
		Let $C_n$ be a circle graph on $n$ vertices. Then $im(C_n)= \lfloor  \frac{n}{3}\rfloor$.
	\end{theorem}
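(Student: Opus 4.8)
The plan is to establish the two inequalities $im(C_n) \le \lfloor n/3 \rfloor$ and $im(C_n) \ge \lfloor n/3 \rfloor$ separately, interpreting $C_n$ as the cycle on vertices labelled cyclically $v_0, v_1, \dots, v_{n-1}$ with edges $\{v_i, v_{i+1}\}$, all indices read modulo $n$.

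For the upper bound I would begin with an arbitrary induced matching $M$ and record, for each edge $\{v_i, v_{i+1}\} \in M$, its smaller endpoint $v_i$ as the edge's \emph{start}. The decisive local observation is that if $\{v_i, v_{i+1}\}$ and $\{v_j, v_{j+1}\}$ are two edges of $M$ whose starts are consecutive in cyclic order, then $v_{i+1}$ must fail to be adjacent to $v_j$ (otherwise these two endpoints of distinct matching edges are joined in $C_n$, contradicting that $M$ is induced); since adjacency in $C_n$ means consecutive indices, this forces the cyclic gap between consecutive starts to be at least $3$. Summing the $k = |M|$ cyclic gaps, which telescope to exactly $n$, yields $n \ge 3k$, hence $k \le \lfloor n/3 \rfloor$.

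For the lower bound I would exhibit an explicit induced matching of size $\lfloor n/3 \rfloor$. Writing $n = 3q + r$ with $r \in \{0,1,2\}$, take the $q$ edges $\{v_{3t}, v_{3t+1}\}$ for $t = 0, 1, \dots, q-1$. Interior consecutive edges are separated by the buffer vertex $v_{3t+2}$, so their endpoints sit at cyclic distance $2$ and are non-adjacent. The only case needing attention is the wrap-around between the last chosen edge $\{v_{3q-3}, v_{3q-2}\}$ and the first edge $\{v_0, v_1\}$: here there are exactly $r+1 \ge 1$ vertices strictly between $v_{3q-2}$ and $v_0$ in cyclic order, so these endpoints are again at cyclic distance at least $2$ and hence non-adjacent, confirming $M$ is an induced matching.

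The main obstacle, really the only delicate point, is the cyclic wrap-around in both directions: in the upper bound the gap straddling $v_{n-1}$--$v_0$ must be handled on the same footing as the interior gaps, and in the construction the tightest case $r=0$ leaves only a single buffer vertex closing the cycle, which is precisely where the floor in $\lfloor n/3 \rfloor$ becomes binding. As a shorter alternative I would note that both bounds follow from Theorem~\ref{thm1}: the vertex immediately following any matched edge can never be saturated, so every induced matching of $C_n$ leaves some vertex $v$ unsaturated, and then $M$ is an induced matching of $C_n - v = P_{n-1}$, giving $|M| \le im(P_{n-1})$; conversely any induced matching of $P_{n-1}=C_n-v$ stays induced in $C_n$ since $v \notin V(M)$. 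As $\lceil (n-2)/3 \rceil = \lfloor n/3 \rfloor$ for all $n \ge 3$, this reproves the equality.
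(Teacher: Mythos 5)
The paper never actually proves this theorem: it sits under ``Initial Results'' prefaced only by ``The following results are obvious,'' and unlike Theorem~\ref{thm3} it carries no citation. So your proposal is not a variant of the paper's argument --- it supplies a proof where the paper has none, and the proof you give is correct on both sides. Your upper bound is the standard gap-counting argument: for two edges of $M$ whose starts are cyclically consecutive, a gap of $1$ is impossible because the edges would share a vertex (you use this implicitly --- it is worth half a sentence, since your stated reason, adjacency of $v_{i+1}$ and $v_j$, covers only the gap-$2$ case), and a gap of $2$ is impossible because the edge $\{v_{i+1},v_{i+2}\}$ would join endpoints of distinct matching edges; hence all $k=|M|$ gaps are at least $3$, they sum to $n$, and $k\le\lfloor n/3\rfloor$. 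Your construction $\{v_{3t},v_{3t+1}\}$, $0\le t\le q-1$, with $n=3q+r$, is indeed induced, the wrap-around leaving $r+1\ge 1$ buffer vertices. The alternative you sketch at the end is the cleanest route and the one most in the spirit of the paper: every induced matching of $C_n$ leaves some vertex $v$ unsaturated, remains induced in $C_n-v\cong P_{n-1}$, and conversely any induced matching of $P_{n-1}$ stays induced in $C_n$ because the two restored edges are incident to the unsaturated vertex $v$; combining this with Theorem~\ref{thm1} and the identity $\lceil (n-2)/3\rceil=\lfloor n/3\rfloor$ for $n\ge 3$ derives the cycle result directly from the path result that the paper does state (and uses repeatedly later). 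Either version of your argument would be a genuine improvement over the paper's bare assertion.
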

    \begin{theorem}\label{thm3}\cite{M1}
    	 Suppose that $G_{3,n}$ is a grid graph obtained by the Cartesian product $P_3 \Box P_n$, where $n$ is even or odd. Then for a positive integer $k$, 
    	
    	\begin{center}
    		
    		$im(P_3 \Box P_n) = \left\{
    		\begin{array}{ll}
    			\lceil \frac{3n}{4} \rceil  &  \mbox{if} \;\; n \; \mbox {is even}; \\
    			\frac{3(n-1)}{4} & \mbox{if} \;\; n=4k+1  \\
    			\frac{3(n-1)+2}{4} &  \mbox{if} \; \; n=4k+3\\

    		\end{array}
    		\right.$
    		
    	\end{center}
    	
    \end{theorem}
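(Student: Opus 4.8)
The plan is to establish the stated formula in two halves: a lower bound from an explicit periodic construction, and a matching upper bound from an amortized counting argument over blocks of four consecutive columns. Throughout I label the vertices of $P_3 \Box P_n$ by $(i,j)$ with $i \in \{1,2,3\}$ the row and $j \in [n]$ the column, and I recall that a set of edges $M$ is an induced matching exactly when the set $S$ of saturated vertices induces a $1$-regular subgraph, i.e.\ no edge of $G$ joins the endpoints of two distinct members of $M$.

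For the lower bound I would tile the grid with the period-$4$ block that places, in columns $4t+1,4t+2,4t+3,4t+4$, the two horizontal edges $(1,4t+1)(1,4t+2)$ and $(3,4t+1)(3,4t+2)$ together with the middle horizontal edge $(2,4t+3)(2,4t+4)$. A direct check of the finitely many saturated--saturated adjacencies shows this block is induced and, crucially, that abutting two blocks preserves the property: the only saturated vertex adjacent across the seam is $(2,4t+4)$, whose grid-neighbour $(2,4t+5)$ is left unsaturated. This yields $3$ edges per $4$ columns, i.e.\ $3k$ edges when $n=4k$. The three boundary residues are then handled by appending a short pattern to the last full block: for $n=4k+1$ the extra column admits no new induced edge (any edge there would saturate a neighbour of the block's middle edge), giving $3k$; for $n=4k+2$ two extra edges in rows $1$ and $3$ fit, giving $3k+2$; and for $n=4k+3$ one checks that the three trailing columns still admit only those same two edges, giving $3k+2$. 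Collecting the four cases reproduces $\lceil 3n/4\rceil$ for even $n$ and the two odd expressions.

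The substance of the proof, and the step I expect to be the main obstacle, is the upper bound. A naive count fails: a single column (a copy of $P_3$) can carry up to two edges of $M$---for instance the two horizontal edges leaving rows $1$ and $3$---so per-column bounds only give $|M|\le 2n$. The ratio $3/4$ must instead be extracted from the interaction between neighbouring columns, since saturating rows $1$ and $3$ of a column forces most of the adjacent column to stay unsaturated. My approach is to prove a local lemma stating that any four consecutive columns carry at most three edges of $M$, where each edge is charged to the block containing its left endpoint. The delicate point is a horizontal edge straddling a block boundary: it is charged to the left block yet its right endpoint blocks configurations in the right block, so the blocks cannot be analysed in isolation. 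I would resolve this by classifying each column by its \emph{profile}---which of its three vertices are saturated, and which are already saturated by an edge reaching in from the left---and showing that the admissible profile sequences, recorded as a transfer matrix on a bounded state set, never average more than $3/4$ edge per column. Summing the block bound over the $\lfloor n/4\rfloor$ windows, and tracking the $1,2,3$ leftover columns exactly as in the construction, gives the matching upper bound and pins down the additive constant in each residue class.

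Finally, the two bounds agree case-by-case with the displayed formula, so combining them completes the proof; the only genuine care needed is the bookkeeping of boundary columns, which must be carried out separately for $n\equiv 0,1,2,3 \pmod 4$ in order to recover the exact values rather than merely the asymptotic rate $3n/4$.
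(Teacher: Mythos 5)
First, a point of reference: the paper offers no proof of this statement at all --- it is quoted from the cited source \cite{M1} as a known result --- so your proposal has to stand entirely on its own. Your lower-bound half does: the period-$4$ tiling (rows $1$ and $3$ matched horizontally in the first two columns of each block, the middle row in the last two) is an induced matching, and your boundary bookkeeping for $n \equiv 1,2,3 \pmod 4$ reproduces the stated values.

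The genuine gap is in the upper bound, and it is not merely unfinished detail: the local lemma you rest it on is false. Take the block of columns $1$--$4$ and the edges $(1,1)(1,2)$, $(3,1)(3,2)$, $(1,4)(1,5)$, $(3,4)(3,5)$. All four edges have their left endpoint in columns $1$--$4$, so all four are charged to this block under your scheme, yet they form an induced matching in $P_3 \Box P_n$ for every $n \geq 5$ (the only adjacencies among saturated vertices are the matching edges themselves). So a block can be charged four edges, and the step your argument actually relies on --- ``summing the block bound over the $\lfloor n/4 \rfloor$ windows'' --- cannot be carried out. You do anticipate exactly this cross-boundary difficulty and propose a column-profile/transfer-matrix analysis as the fix, but that analysis is only named, never executed: no state set, no transition rules, no potential function certifying the rate. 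Worse, even if executed, a statement of the form ``no admissible profile sequence averages more than $3/4$ edge per column'' yields only $|M| \leq 3n/4 + O(1)$, whereas the entire content of the theorem is the exact additive constant in each residue class (for instance that $im = 3k$, not $3k+1$, when $n = 4k+1$). To close the gap you would need either a correct discharging argument in which a block overcharged to four forces a compensating deficit in its neighbours, together with exact treatment of both ends of the grid, or a full dynamic-programming induction on columns whose extremal values are computed exactly for every boundary state; neither is present in the proposal.
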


	\section{Result}
	Now we present the results we have obtained in this work. First we show a result on induced matching on star graph $S_m$.
	
	\begin{theorem}\label{thm4}
		Let $S_m$ be a given a star graph such that the central vertex is $v_1$ and it is adjacent to $m-1$ leaves. Then $im(S_m)=1$ 
	\end{theorem}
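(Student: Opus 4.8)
The plan is to prove $im(S_m)=1$ directly from the definition of an induced matching together with the simple structure of the star graph. First I would recall that every edge of $S_m$ has the form $v_1 w$, where $v_1$ is the central vertex and $w$ is one of the $m-1$ leaves; in particular $S_m$ has no edges between leaves, so every edge is incident to $v_1$. This structural observation is the entire engine of the argument.

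Next I would establish the lower bound $im(S_m)\geq 1$, which is immediate: any single edge $e=v_1w$ constitutes an induced matching, since a matching consisting of one edge trivially satisfies the induced condition (there is no second edge for the distance requirement to fail against). Provided $m\geq 2$ so that at least one leaf exists, such an edge is available, giving $im(S_m)\geq 1$.

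For the upper bound $im(S_m)\leq 1$, I would argue by contradiction. Suppose $M$ is an induced matching with $|M|\geq 2$, and let $e_1=v_1w_1$ and $e_2=v_1w_2$ be two distinct edges of $M$. As noted, both are incident to the central vertex $v_1$, so they share the endpoint $v_1$; equivalently their leaf-endpoints $w_1$ and $w_2$ are both adjacent to $v_1$, so the edge $e_1$ itself joins an endpoint of $e_2$ to an endpoint of $e_1$. This violates the defining requirement that no endpoint of one matched edge be incident to an edge that is incident to another matched edge. Hence no induced matching can contain two edges, so $im(S_m)\leq 1$.

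Combining the two bounds yields $im(S_m)=1$. The argument involves no real obstacle: the only point requiring a moment of care is correctly interpreting the induced-matching condition for edges sharing the vertex $v_1$, namely confirming that two distinct star edges always fail the condition because every pair of edges in $S_m$ is adjacent through the centre. Once that is stated cleanly, the result follows.
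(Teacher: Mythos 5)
Your proof is correct and follows essentially the same route as the paper's: both arguments rest on the single structural fact that every edge of $S_m$ is incident to the central vertex $v_1$, so no two edges can coexist in an induced matching (the paper phrases this via saturation and $diam(S_m)=2$, you phrase it via the shared endpoint $v_1$). Your version is, if anything, slightly cleaner, since it separates the lower bound $im(S_m)\geq 1$ explicitly and notes the need for $m\geq 2$.
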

(The implication of this result is that every star contains at most one element in its induced matching set.)
\begin{proof}
	Let $S_m$ be a start with $v_1$ being the central vertex. Then $v_1$ is saturated. Suppose that $v_k \in V(S_m)$, $k \leq m$, such that $v_k$ is saturated. Then $v_1v_k \in M$. Now for all $i$, $i \neq k$, $v_i \in V(G)$ is unsaturated since the $diam(S_m)=2$. Thus $im(S_m)=|m|=1$.
\end{proof}
Now we present our first results on the induced matching of stacked-book graph $G_{m,n}$.

\begin{lemma}\label{lem1}
	Suppose that $G_{m,n}$ is a stacked-book graph. Then if the induced matching number of $G_{m,n}$ is obtained, then the central vertices $v_1(1)$, $v_1(2)$ of factor stars $S_m(1)$ and $S_m(2)$ of $G_{m,n}$ are not unsaturated. 
\end{lemma}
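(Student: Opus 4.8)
The plan is to argue by contradiction using a local augmentation/exchange argument confined to the first two stars. Let $M$ be an induced matching with $|M|=im(G_{m,n})$, and suppose, toward a contradiction, that $v_1(1)$ is unsaturated; I would run the case of $v_1(2)$ in parallel, since the two centers are adjacent along the spine $v_1(1)v_1(2)\cdots v_1(n)$. The first thing I would record is the neighborhood of $v_1(1)$ in $G_{m,n}$: the $m-1$ leaves $v_2(1),\ldots,v_m(1)$ of $S_m(1)$ together with the spine vertex $v_1(2)$. Since $M$ is an induced matching and $v_1(1)\notin V(M)$, the real content lies in deciding which of these neighbors already belong to $V(M)$, and this split governs the whole argument.

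First I would dispose of the \emph{free} case. If none of the neighbors of $v_1(1)$ is saturated, then I may adjoin the spine edge $v_1(1)v_1(2)$ to $M$ (or a star edge $v_1(1)v_j(1)$, should $v_1(2)$ be saturated only through its own side). Because neither endpoint of the adjoined edge is then adjacent to a vertex of $V(M)$, the set $M\cup\{v_1(1)v_1(2)\}$ is again an induced matching, of strictly larger cardinality, contradicting $|M|=im(G_{m,n})$. This step already forces $v_1(1)$ to have a saturated neighbor, and symmetrically for $v_1(2)$.

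The second step is the exchange, which I expect to be the crux. In the remaining case some neighbor of $v_1(1)$ lies in $V(M)$, and by Theorem~\ref{thm4} the star $S_m(1)$ contributes at most one edge to $M$, so the obstruction is realized either through a single leaf edge $v_j(1)v_j(2)$ or through the spine. I would delete that blocking edge and reinsert an edge incident to $v_1(1)$, and then show, by a counting comparison, that the substitution does not lower $|M|$: Theorem~\ref{thm4} bounds the edges of $M$ meeting $S_m(1)$, while the path bound $im(P_n)=\lceil\frac{n-1}{3}\rceil$ of Theorem~\ref{thm1} bounds how many edges of $M$ can run along each leaf-path $v_j(1)v_j(2)\cdots$. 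Combined with maximality, the substituted family is still maximum yet now saturates $v_1(1)$; applying the same move at $v_1(2)$ then yields that neither center is unsaturated.

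The hard part will be the bookkeeping in the exchange: I must verify that freeing a center by deleting a blocking edge never costs more matching edges than it restores, and in particular rule out a cascade in which removing one edge forces the removal of several others further along the leaf-paths or the spine. I would attempt to control this by isolating $S_m(1)$ together with its incident leaf-edges as a small local gadget and checking the finitely many saturation patterns of its neighborhood directly, using $im(S_m)=1$ and the path inequality as the governing constraints; making this local count tight enough to preserve $|M|$ while guaranteeing saturation of the center is exactly the step where I anticipate the main difficulty.
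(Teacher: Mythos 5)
Your proposal aims at the wrong direction, and the claim you set out to prove is false. The phrase ``not unsaturated'' in the statement is a slipped double negative: the paper's own proof assumes a center \emph{is} saturated and derives a contradiction, i.e.\ it establishes that in a maximum induced matching the central vertices are \emph{unsaturated}. This reading is confirmed by everything downstream: the theorem computing $im(G_{m,2})=m-1$ exhibits the maximum matching $\{v_iu_i : i\in[2,m]\}$, which leaves both centers unsaturated, and Lemma \ref{lem6} later asserts outright that the middle centers of $G_{m,5}$ are unsaturated. The statement you actually tried to prove --- that a maximum induced matching saturates $v_1(1)$ and $v_1(2)$ --- fails already in $G_{m,2}$ for $m\ge 3$: if $v_1(1)$ is saturated, whether through a star edge $v_1(1)v_j(1)$ or through the spine edge $v_1(1)v_1(2)$, then \emph{every} leaf $v_j(1)$ is adjacent to the saturated center, so no cross edge $v_j(1)v_j(2)$ can lie in $M$, and symmetrically on the second star; the part of $M$ meeting the two stars collapses to the single edge containing $v_1(1)$, giving $|M|=1$. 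Meanwhile the induced path $v_2(1)\rightarrow v_2(2)\rightarrow v_1(2)\rightarrow v_3(2)\rightarrow v_3(1)$ already carries an induced matching of size $2$ (Theorem \ref{thm1}), so no maximum $M$ can saturate a center. That is the paper's entire argument, run in the direction opposite to yours.

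This is exactly where your exchange step breaks, and you flagged the danger yourself: the ``cascade'' you hoped to rule out is not a pathology to be controlled but the unavoidable mechanism. Deleting one blocking edge and inserting an edge at $v_1(1)$ is never a one-for-one trade, because the center dominates all $m-1$ leaves of $S_m(1)$: saturating it invalidates all $m-1$ edges $v_j(1)v_j(2)$ simultaneously, so the substitution trades up to $m-1$ matching edges for one and strictly decreases $|M|$ when $m\ge 3$. Your ``free case'' augmentation is likewise moot: in a maximum induced matching of $G_{m,2}$ the neighbors of $v_1(1)$ are saturated (all leaves of $S_m(1)$ are matched across the spine), so that case never arises, consistently with $M$ being maximum while the center stays unsaturated. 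The standard augment-or-exchange template you are importing works for ordinary matchings, where saturating one more vertex costs at most one edge; for \emph{induced} matchings at a high-degree vertex the cost is the whole neighborhood, which is precisely why the correct proof is the short counting contradiction above rather than any local exchange.
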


\begin{proof}
	Suppose that $v_1$ and $u_1$  are the central vertices of $S_m(1)$ and $S_m(2)$ stars. Now, suppose that $v_1$ is saturated. Then either $v_1v_i \in M$, $v_i \in V(S_m(1))$ for some $2 \leq i < m $ or $v_1u_1 \in M$. Suppose that $v_1 v_i \in M$. By Theorem \ref{thm4}, if $v_1$ is saturated, then at least $m-2$ vertices on $S_m$ will be unsaturated. Thus, for all $v_i \in V(S_m(1))$, $v_iu_i \notin M$. Same argument holds if $u_1 \in S_m(2)$ is saturated. Thus, $im(G_{m,2}) = 1$. Now, suppose that $v_1u_1 \in M$. Since $d(v_1,v_i)=1=d(u_1,u_i)$ for all $i \in [2,m]$, then $v_i,u_i$ are unsaturated for all $i \in [2,m]$.  Clearly there exists a path $P_5$ in $G_{m,2}$. From Theorem \ref{thm1}, $P_5$ contains two edges in $M$ of $G_{m,2}$. Thus a contradiction. 
\end{proof}
Now we present the first theorem,
\begin{theorem}
	Let $G_{m,2}$ be a stacked-book graph. Then $im(G_{m,2})=m-1$. 
\end{theorem}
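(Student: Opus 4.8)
The plan is to prove the two inequalities $im(G_{m,2}) \ge m-1$ and $im(G_{m,2}) \le m-1$ separately. Throughout I write $v_1$ for the central vertex of $S_m(1)$ with leaves $v_2,\dots,v_m$, and $u_1$ for the central vertex of $S_m(2)$ with leaves $u_2,\dots,u_m$, so that by the structure description the edge set of $G_{m,2}$ consists of the spokes $v_1v_i$ and $u_1u_i$ (for $2\le i\le m$) together with the rungs $v_iu_i$ (for $1\le i\le m$) coming from $E(G')$.

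For the lower bound I would exhibit an explicit induced matching. Take $M=\{\,v_iu_i : 2\le i\le m\,\}$, the set of rungs joining the leaves of the two stars; this has $m-1$ edges. To see that $M$ is induced, I check that no endpoint of one rung is adjacent to an endpoint of another: for distinct $i,j\ge 2$ the leaves $v_i,v_j$ are non-adjacent (leaves of a star meet only at the centre), likewise $u_i,u_j$ are non-adjacent, and $v_i$ is joined only to $v_1$ and $u_i$, so $v_i$ is adjacent to neither $v_j$ nor $u_j$. Hence the subgraph induced by the endpoints of $M$ is exactly $M$, giving $im(G_{m,2})\ge m-1$.

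For the upper bound I would invoke Lemma \ref{lem1}: in a matching realising $im(G_{m,2})$ the central vertices $v_1$ and $u_1$ may be taken unsaturated. Once both centres are unsaturated, every edge incident to a centre (the spokes $v_1v_i$, $u_1u_i$ and the central rung $v_1u_1$) is forbidden, so the only edges available to $M$ are the leaf rungs $v_iu_i$ with $2\le i\le m$, of which there are exactly $m-1$. Therefore $|M|\le m-1$, and combined with the lower bound this yields $im(G_{m,2})=m-1$. As a self-contained alternative I would argue directly: if some centre, say $v_1$, is saturated, then whichever edge saturates it (a spoke $v_1v_i$ or the central rung $v_1u_1$) has an endpoint adjacent to all remaining vertices except possibly some leaves of the opposite star, and those leaves carry no edges among themselves; this forces $|M|=1\le m-1$.

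The routine verifications — the non-adjacency of distinct leaves, and the bookkeeping of which vertices each saturating edge kills — are straightforward. The only genuine point requiring care is the upper bound: one must ensure that no induced matching can mix a centre-incident edge with several leaf rungs. This is exactly the content controlled by Lemma \ref{lem1}, or by the direct case analysis above, and I expect it to be the main step to get right, in particular confirming that saturating a centre collapses the admissible matching all the way down to a single edge.
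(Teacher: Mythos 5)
Your proof is correct, but your upper bound goes by a genuinely different route than the paper's. The paper's proof handles the upper bound by decomposing the relevant edges of $G_{m,2}$ into edge-disjoint paths $P_5(i) = v_i \rightarrow u_i \rightarrow u_1 \rightarrow u_{i+1} \rightarrow v_{i+1}$ (with a parity split: $\frac{m-1}{2}$ such paths when $m$ is odd, and $\frac{m-2}{2}$ paths plus one $P_3$ when $m$ is even), then sums the bound $im(P_5)=2$ over the decomposition to get $m-1$. You instead eliminate all centre-incident edges at the outset --- either by citing Lemma \ref{lem1} or by your direct case analysis showing that saturating a centre collapses $M$ to a single edge --- and then simply count the $m-1$ leaf rungs that remain. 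Your route has two advantages: it avoids the parity case split entirely, and it makes explicit a step the paper glosses over, namely the justification that the spokes $v_1v_i$ and the central rung $v_1u_1$ can be excluded (the paper asserts that $\bar{P}$ ``consists of all the edges that can be in $M$'' without argument, which is precisely the content of Lemma \ref{lem1}). Your self-contained alternative is also prudent, since the statement of Lemma \ref{lem1} as printed (``not unsaturated'') is garbled relative to what its proof actually establishes, and relying on your own case analysis sidesteps that ambiguity. What the paper's approach buys in exchange is a template --- bounding $im$ by summing over an edge-disjoint family of induced paths --- that it reuses later, for instance in the proof for $G_{m,5}$. The lower bounds are identical: both exhibit the induced matching $\{v_iu_i : 2 \le i \le m\}$, though you verify the non-adjacency conditions more carefully than the paper does.
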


\begin{proof}
	Let $G_{m,2}$ be a stacked-book graph. Then there exist $S_m(1), S_m(2) \subseteq G_{m,2}$ with vertices $v_1, v_2 \cdots v_m$ and $u_1, u_2, \cdots u_m$ and a path $P_5(i) = v_i \rightarrow u_i \rightarrow u_1 \rightarrow u_{i+1} \rightarrow v_{i+1}$, for all $i \in [2,m]$. Thus, there exits, the set $\bar{P}=P_5(2), P_5(3), \cdots , P_5(\frac{m-1}{2})$, if $m$ is odd. Thus, there are $\frac{m-1}{2}$ number of $P_5-$paths. Now, by Theorem \ref{thm1}, $im(P_5)= 2$. Clearly, $\bar{P}$ consists of all the edges in $E(G_{m,2})$ that can be in $M$. Therefore, $im(G_{m.2}) \leq 2\left(\frac{m-1}{2} \right)=m-1$. Suppose that $m$ is even. Then, set $P*=\left\lbrace P_5(2), P_5(3), \cdots , P_5(\frac{m-2}{2}), P_3(t) \right\rbrace $, where $P_3(t)= v_k \rightarrow u_k \rightarrow u_1$. So, $im(P* \backslash P_3(t))=2 \left(\frac{m-2}{2}\right)=m-2$. By an earlier result, $im(P_3(t))=1$. Therefore, $im(P*)=m-1$. Hence, for any integer $m$, $im(G_{m,2}) \leq m-1$. Conversely, by definition of induced matching and stacked-book graph, $v_2u_2, v_3u_3, \cdots, v_mu_m$, satisfying the distance conditions to belong to $M$. Thus, $im(G_{m,2}) \geq m-1$ and hence the claim.   
\end{proof}

Next we consider the induced matching in $G_{m,3}$, where $m$ is either even or odd and show that the graph contains the same induced matching as $G_{m,2}$.

\begin{theorem}\label{thm5}
	Let $G_{m,3}$ be a stacked-book graph. Then $im(G_{m,3})=m-1$. 
\end{theorem}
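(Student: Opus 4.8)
The plan is to establish the two inequalities $im(G_{m,3})\ge m-1$ and $im(G_{m,3})\le m-1$ separately (I take $m\ge 3$, the stars being non-degenerate). Write $v_1,\dots,v_m$, $w_1,\dots,w_m$, $u_1,\dots,u_m$ for the vertices of the factor stars $S_m(1),S_m(2),S_m(3)$, with centres $v_1,w_1,u_1$, so that the path-direction edges are $v_iw_i$ and $w_iu_i$ for $i\in[m]$ and the star edges are $v_1v_i,w_1w_i,u_1u_i$ for $i\in[2,m]$. For the lower bound I would exhibit the explicit set $M_0=\{v_iw_i:i\in[2,m]\}$. Since the $v_i$ are pairwise non-adjacent leaves of $S_m(1)$, the $w_i$ are pairwise non-adjacent leaves of $S_m(2)$, and $v_i$ is adjacent to $w_j$ only when $i=j$, no edge of $G_{m,3}$ joins an endpoint of $v_iw_i$ to an endpoint of $v_jw_j$ for $i\neq j$; hence $M_0$ is an induced matching and $im(G_{m,3})\ge|M_0|=m-1$.

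For the upper bound I would fix an arbitrary induced matching $M$ and split into cases according to how the three central vertices are saturated, the governing principle being that (as in Theorem \ref{thm4}) a saturated centre dominates its whole star and therefore sterilises many potential edges. First, if none of $v_1,w_1,u_1$ is saturated, then every edge of $M$ lies on one of the leaf-paths $v_i-w_i-u_i$, $i\in[2,m]$; each such $P_3$ carries at most one edge of $M$ by Theorem \ref{thm1}, and edges from distinct leaf-paths are automatically non-adjacent, so $|M|\le m-1$. Second, if the middle centre $w_1$ is saturated, then the matched edge at $w_1$ --- whether $w_1w_k$, $v_1w_1$ or $w_1u_1$ --- has closed neighbourhood $N[\cdot]$ containing all of $S_m(2)$ together with $v_1$ and $u_1$; a short check shows the leaves left available are then pairwise non-adjacent, forcing $M$ to be that single edge, so $|M|=1\le m-1$.

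The remaining, and principal, case is when $w_1$ is unsaturated while an outer centre, say $v_1$, is saturated; by the layer-reversing symmetry $v\leftrightarrow u$ the case of $u_1$ is identical. Here $v_1$ must be matched to a leaf, $v_1v_k\in M$, and deleting the closed neighbourhood $N[v_1]\cup N[v_k]$ (all of $S_m(1)$ together with $w_1$ and $w_k$) leaves every other edge of $M$ inside the induced subgraph $H$ on $\{w_i:i\in[2,m],\ i\neq k\}\cup\{u_1,\dots,u_m\}$. I would then identify $H$ as a spider: the star $S_m(3)$ with centre $u_1$, carrying a pendant $w_i$ on each leaf $u_i$ with $i\neq k$. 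Computing $im(H)=m-2$ is the crux: the $m-2$ pendant edges $w_iu_i$ form an induced matching, while using any edge $u_1u_j$ at the centre dominates all the $u_i$ and hence kills every pendant edge, leaving size $1$. Thus $|M|\le 1+im(H)=m-1$, which closes the bound. I expect this last $im(H)$ computation --- in particular ruling out that the third-layer centre $u_1$ could be exploited to gain an extra edge --- to be the main obstacle, the earlier cases being routine once the domination principle is in place.
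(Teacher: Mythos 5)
Your proposal is correct and takes essentially the same route as the paper: both arguments rest on a case analysis over which of the three star centres is saturated, using the principle that a matched centre dominates its whole star, which forces $|M|=1$ when the middle centre is saturated and caps $|M|$ at $m-1$ in the remaining cases. Your execution is tighter in places --- an explicit matching $\{v_iw_i : i\in[2,m]\}$ for the lower bound where the paper invokes $im(G_{m,2})\le im(G_{m,3})$, and a clean spider-graph computation $im(H)=m-2$ for the outer-centre case where the paper argues more loosely by exclusion --- but the underlying approach is the same.
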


To proof Theorem \ref{thm5}, we need two results, the first one, which is about about the nature of induced matching and distances between vertices of graphs, is more like a folklore because it follows from the definitions of induced matching of graphs.

\begin{lemma}\label{lem2}
	Let $e_1$ be a member of $M$ of a graph $G$. Then some edge $e_2 \in E(G)$ also belongs to $M$if there exists $v_1 \in e_1$ and $u_1 \in e_2$ such that $d(v_1,u_1)\geq 3$ and $v_2 \in e_1$ and $u_2 \in e_2 $, such that $d(v_2,u_2) \geq 2$.
\end{lemma}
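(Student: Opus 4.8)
The goal is to prove Lemma~\ref{lem2}, which characterizes when two edges $e_1$ and $e_2$ can both lie in an induced matching $M$ in terms of the distances between their endpoints. The plan is to translate the definition of induced matching directly into a distance condition and verify both directions of the implication. Recall that $M$ is an induced matching precisely when, for any two edges $e_1, e_2 \in M$, no endpoint of $e_1$ is adjacent to any endpoint of $e_2$, and of course $e_1$ and $e_2$ share no endpoint. Writing $e_1 = \{v_1, v_2\}$ and $e_2 = \{u_1, u_2\}$, the induced-matching condition is therefore equivalent to saying that $d(v_i, u_j) \geq 2$ for every pair of endpoints $v_i \in e_1$ and $u_j \in e_2$.

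First I would make this reformulation explicit: two disjoint edges $e_1, e_2$ can coexist in $M$ if and only if all four pairwise distances $d(v_i, u_j)$ between an endpoint of $e_1$ and an endpoint of $e_2$ are at least $2$. This is just the statement that no endpoint of one edge coincides with or is adjacent to an endpoint of the other. The substance of the lemma is then to show that the stated mixed condition, namely that there exist endpoints at distance $\geq 3$ together with the complementary pair at distance $\geq 2$, suffices to force all four distances to be $\geq 2$. So the core step is to argue that if one pair of endpoints (say $v_1, u_1$) is at distance at least $3$, then automatically the remaining cross-distances $d(v_1, u_2)$ and $d(v_2, u_1)$ cannot be too small.

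The key geometric observation is the triangle inequality combined with the fact that endpoints within a single edge are at distance exactly $1$. Since $d(v_1, v_2) = 1$ and $d(u_1, u_2) = 1$, from $d(v_1, u_1) \geq 3$ one obtains $d(v_2, u_1) \geq d(v_1,u_1) - d(v_1,v_2) \geq 2$ and $d(v_1, u_2) \geq d(v_1,u_1) - d(u_1,u_2) \geq 2$ by the triangle inequality. Combining this with the hypothesis $d(v_2, u_2) \geq 2$ (the second stated condition), every one of the four cross-distances is at least $2$, which is exactly the reformulated criterion for $e_1$ and $e_2$ to both belong to an induced matching. Thus I would organize the proof as: (i) state the distance reformulation of the induced-matching condition; (ii) apply the triangle inequality twice using the intra-edge distances to deduce the two remaining cross-distances are $\geq 2$; (iii) conclude.

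The main obstacle, such as it is, lies less in the mathematics than in pinning down the precise logical form of the claim, since the statement as written is somewhat ambiguous about whether the two distance conditions are hypotheses guaranteeing membership or a characterization of it. I would first commit to reading it as: \emph{if} there is a pair of endpoints at distance $\geq 3$ and the complementary pair at distance $\geq 2$, \emph{then} $e_2$ may be added to $M$ alongside $e_1$. Under that reading the triangle-inequality argument above closes the gap cleanly. I expect no genuine difficulty beyond being careful that the ``complementary pair'' condition covers exactly the cross-distance not controlled by the triangle inequality, and that the two edges are indeed disjoint (which follows since distance $\geq 2$ between endpoints precludes shared vertices).
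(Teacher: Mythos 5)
Your proof is correct, and it does more than the paper does: the paper's entire proof of Lemma~\ref{lem2} is the single sentence ``The proof follows from the definition of induced matching $M$ of graph $G$,'' with the lemma explicitly flagged as folklore. Your write-up supplies the missing content. In particular, two things you do are genuinely absent from the paper: (i) the explicit reformulation that two disjoint edges $e_1=\{v_1,v_2\}$, $e_2=\{u_1,u_2\}$ are compatible in an induced matching precisely when all four cross-distances $d(v_i,u_j)$ are at least $2$; and (ii) the triangle-inequality step, using $d(v_1,v_2)=d(u_1,u_2)=1$, which shows that $d(v_1,u_1)\geq 3$ already forces $d(v_2,u_1)\geq 2$ and $d(v_1,u_2)\geq 2$, so that the lemma's oddly asymmetric hypotheses (one pair at distance $\geq 3$, the complementary pair at distance $\geq 2$) really do cover all four required conditions. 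That second point is exactly the part the paper leaves unexplained, since a reader might otherwise wonder why only two of the four pairs need to be hypothesized. Your handling of the statement's ambiguity is also the right call: as literally written, ``$e_2$ also belongs to $M$'' cannot be a conclusion about membership in $M$ (compatibility with $e_1$ alone says nothing about the other edges of $M$), so reading it as ``$e_2$ may coexist with $e_1$ in an induced matching'' is the only sensible interpretation, and it is the one the paper itself uses in later applications of the lemma.
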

\begin{proof}
	The proof follows from the definition of induced matching $M$ of graph $G$
\end{proof}

\begin{lemma} \label{lem3}
	Let $G_{m,3}$ be a stacked-book graph with factor star graphs $S_m(1)$, $S_m(2)$ and $S_m(3)$ such that $v_1 \rightarrow u_1 \rightarrow w_1$ is a $P_3$ path in $G_{m,3}$, where $v_1, u_1$ and $w_1$ are the central vertices of the respective factor star graphs.Then if $u_1$ is saturated, and $u_1v_k \in M$ for some $v_k \in V(G_{m,3})$, then $|M|=1$ and thus, not the maximum induced matching of $G_{m,3}$.
\end{lemma}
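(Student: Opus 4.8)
The plan is to exploit the fact that the central vertex $u_1$ of the middle star $S_m(2)$ is adjacent to a dominating portion of $G_{m,3}$, so that saturating it leaves no room for a second matching edge. First I would record the closed neighborhood of $u_1$ explicitly. From the structure of the stacked-book graph described above, $u_1$ is joined to the two spine vertices $v_1$ and $w_1$ and to all the leaves $u_2,\dots,u_m$ of its own star; hence $N[u_1]=\{u_1,v_1,w_1,u_2,\dots,u_m\}$, and the only vertices lying outside $N[u_1]$ are the leaves $v_2,\dots,v_m$ of $S_m(1)$ together with $w_2,\dots,w_m$ of $S_m(3)$.

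The key step is to observe that $V(G_{m,3})\setminus N[u_1]$ is an independent set. Indeed, each leaf $v_i$ with $i\geq 2$ is adjacent only to the center $v_1$ and to its spine partner $u_i$, both of which lie in $N[u_1]$; the symmetric statement holds for each $w_i$. Consequently no two of the leftover vertices are adjacent, and there is no edge of $G_{m,3}$ having both endpoints outside $N[u_1]$. I regard this independence claim as the crux of the argument: it is precisely the assertion that $u_1$ dominates the spine and its own star so thoroughly that the remaining leaves are mutually non-adjacent.

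Now suppose $u_1$ is saturated with $u_1v_k\in M$, where $v_k$ is necessarily one of the neighbors listed above. Appealing to the induced-matching criterion (cf. Lemma \ref{lem2}), any other edge $e_2\in M$ can contain neither $u_1$ nor a vertex adjacent to $u_1$, for otherwise $u_1v_k$ and $e_2$ would violate the induced-matching condition; hence both endpoints of $e_2$ must lie in $V(G_{m,3})\setminus N[u_1]$. By the preceding step no such edge exists, so $M=\{u_1v_k\}$ and $|M|=1$. I would emphasize that this conclusion is uniform over the possible partners of $u_1$ (namely $v_1$, $w_1$, or some $u_j$), so that the neighborhood computation dispenses with any genuine case split.

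Finally, to conclude that such an $M$ cannot be the maximum induced matching, I would exhibit a strictly larger induced matching when $m\geq 3$. The set $\{u_iw_i:2\leq i\leq m\}$ of leaf-spine edges is induced, since distinct leaves of a star are mutually non-adjacent and each such edge is separated from the others by its distinct leaf index; this yields $im(G_{m,3})\geq m-1\geq 2>1$. Thus an induced matching of size one is not maximum, which is the claim. The only point demanding care throughout is the independence assertion of the second paragraph; once it is established, the rest follows directly from Theorem \ref{thm4} and the definition of induced matching.
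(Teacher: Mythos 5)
Your proof is correct, but it is organized differently from the paper's. The paper argues through distances: invoking Lemma \ref{lem2} and the fact that $\mathrm{diam}(G_{m,3})=3$, it notes that a second matching edge $v_iv_j$ would need an endpoint $v_i$ at distance $3$ from $v_k$, forces such a $v_i$ to be a leaf of $S_m(1)$ or $S_m(3)$, and then observes that the other endpoint $v_j$ (either the center $v_1$ resp.\ $w_1$, or the spine partner $u_i$) is adjacent to $u_1$, a contradiction. You instead anchor everything at $u_1$: you compute $N[u_1]=\{u_1,v_1,w_1,u_2,\dots,u_m\}$, show its complement $\{v_2,\dots,v_m,w_2,\dots,w_m\}$ is an independent set, and conclude that no second edge can have both endpoints outside $N[u_1]$, as any edge of $M$ other than $u_1v_k$ must. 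The two arguments rest on the same structural fact --- every edge of $G_{m,3}$ meets $N[u_1]$ --- but your formulation buys several things: it is uniform over the choice of partner $v_k$ (no case split on whether $v_k$ is $v_1$, $w_1$, or a leaf $u_j$); it does not lean on Lemma \ref{lem2}, which as stated in the paper is a sufficient condition for two edges to coexist in $M$ rather than the necessary condition that the paper's contradiction actually needs; and it completes the ``not maximum'' claim by exhibiting the explicit induced matching $\{u_iw_i : 2 \le i \le m\}$ of size $m-1\ge 2$ (valid under the paper's standing assumption $m\ge 3$), a witness the paper's proof omits entirely. The paper's route is shorter if one grants its Lemma \ref{lem2} and the diameter computation; yours is self-contained and tighter logically.
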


\begin{proof}
	For $v_k \in V(G_{m,3})$, $v_k \neq u_1$, for which $v_i \in G_{m,3}$ such that $d(v_k,v_i)=3$ since the $diam(G_{m,3})= 3$. However, suppose that $v_iv_j \in E(G_{m,3})$, for which $d(v_k,v_i)=3$. It is clear that $v_i$ is a leaf if some $S_m(t)$, $t \in \left\lbrace 1,3\right\rbrace $. Thus, $d(u_1,v_j) =1$, hence a contradiction to Lemma \ref{lem2} and hence the result. 
\end{proof}	

\subsection{Proof of Theorem \ref{thm5}} Now we proceed to proof Theorem \ref{thm5}.

\begin{proof} Suppose that $|M| > m-1$. Let $v_1,u_1$ and $w_1$ be the central vertices of $S_m(1), S_m(2)$ and $S_m(3)$ respectively. Clearly, $v_1u_1, u_1w_1 \notin M$ from Lemma \ref{lem3}. Now, first we show that $v_1$ is not saturable. Suppose that $v_1$ is saturable, then $v_1v_q \in M$, where $v_q$ is a leaf on $S_m(1)$. By an earlier result, subgraph induced $S_m(1)$ and $S_m(2)$ does not contain another member of $M$. Also, let $v_qu_q \in E(G_{m,3})$, with $u_q \in S_m(2)$ and $u_qw_q \in E(G)$, with $w_q \in S_m(3)$. By earlier result, $u_qw_q \notin M$. In like manner, if $w_1$ is saturated, and $w_1w_q \in M$ no other edge in subgraph of $G_{m,3}$ induced by $S_m(2)$ and $S_m(3)$ is a member of $M$, and $v_qu_q \notin M$. Without loss of generality, suppose that $v_1,v_q \in M$, then only $\bar{M}=\left\lbrace u_iw_i : i \in [2,m]; \; i \neq\right\rbrace \subset E(G_{m,3}) $ will be member of $M$. Thus $|\bar{M}|=m-2$ and so $|M|=m-1$, which is a contradiction. Now it has been established that none of the pendants of $S_m(1), S_m(2)$ and $S_m(3)$ can be in $M$. Thus, the possible members of $M$ are $\left\lbrace v_iu_i: i \in [2,m]\right\rbrace  \cup \left\lbrace u_iw_i: i \in [2,m] \right\rbrace = M' $. Clearly, $|\bar{M}|=2(m-1)$. By Lemma \ref{lem2}, only half of the members of $\bar{M}$ can be in $M$. Thus, $im(G_{m,3}) \leq m-1$. Reasonably, $im(G_{m,2}) \leq im(G_{m,3})$. By earlier result, therefore, $iu(G_{m,3}) \geq m-1$ and thus $im(G_{m,3}) = m-1$.
	
\end{proof}

Next we investigate the induced matching number of $G_{m,4}$. We start with a lemma that will be employed in the main result.

\begin{lemma} \label{lem4}
	Let $G_{m,4}$ be a stacked-book graph such that $S_m(1), S_m(2), S_m(3)$ and $S_m(4)$ are the factor stars of $G_{m,4}$. Suppose that $im(G_{m,4}) \geq m$. Then if $M' = \left\lbrace u_iw_i: i[2,m]; \; u_i \in S_m(2), w_i \in S_m(3) \right\rbrace $, then $M'$ is not a subset of $M$.
\end{lemma}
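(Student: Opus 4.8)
The plan is to argue by contradiction. I would assume that $M$ is a maximum induced matching of $G_{m,4}$ realizing $im(G_{m,4})\ge m$ and that $M'=\{u_iw_i:i\in[2,m]\}\subseteq M$, where these are the $m-1$ cross-edges joining the leaves of $S_m(2)$ to the leaves of $S_m(3)$. Since $|M'|=m-1$, the hypothesis $|M|=im(G_{m,4})\ge m$ forces the existence of at least one edge $e\in M\setminus M'$. The whole argument then reduces to showing that no such $e$ can exist, so that $M=M'$ and $|M|=m-1<m$, contradicting the assumption.

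First I would fix notation for the four factor stars, writing $S_m(1)=\{v_1,\dots,v_m\}$, $S_m(2)=\{u_1,\dots,u_m\}$, $S_m(3)=\{w_1,\dots,w_m\}$ and $S_m(4)=\{x_1,\dots,x_m\}$, with $v_1,u_1,w_1,x_1$ the respective central vertices, and recall that the cross-edges of $G_{m,4}$ are exactly $v_iu_i$, $u_iw_i$, $w_ix_i$ for $i\in[m]$. Next I would record that $M'$ saturates precisely the leaves $u_2,\dots,u_m$ of $S_m(2)$ and $w_2,\dots,w_m$ of $S_m(3)$, and list the neighbours of these saturated vertices: each $u_i$ with $i\ge 2$ is adjacent to $u_1$, to $v_i$ and to $w_i$, while each $w_i$ with $i\ge 2$ is adjacent to $w_1$, to $u_i$ and to $x_i$.

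The core step is a short case check on the candidate edge $e$. By the matching condition $e$ cannot meet any saturated vertex, and by the induced condition (equivalently, by Lemma \ref{lem2}) no endpoint of $e$ may be adjacent to a saturated vertex. The neighbour list above immediately eliminates $u_1$, $w_1$, and every leaf $v_i,x_i$ with $i\in[2,m]$ as a possible endpoint of $e$. The only vertices that survive this elimination are the two central vertices $v_1$ and $x_1$, and these I would handle directly: every neighbour of $v_1$ (namely $v_2,\dots,v_m$ together with $u_1$) and every neighbour of $x_1$ (namely $x_2,\dots,x_m$ together with $w_1$) has already been excluded, while $v_1x_1\notin E(G_{m,4})$ since $S_m(1)$ and $S_m(4)$ are non-adjacent copies. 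Thus $e$ cannot be built from the available vertices, which contradicts $e\in M\setminus M'$ and yields the lemma.

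I expect the only delicate point to be the treatment of the centrals $v_1$ and $x_1$. Unlike the excluded leaves and the centrals $u_1,w_1$, they are not themselves adjacent to a saturated vertex, so they are not removed in the first round and must be ruled out separately by verifying that neither has an admissible partner. Everything else is a routine application of the definition of induced matching to the explicit edge list of $G_{m,4}$.
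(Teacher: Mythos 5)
Your proof is correct and takes essentially the same approach as the paper's: assume $M' \subseteq M$, observe that the saturated leaves $u_i, w_i$ ($i \in [2,m]$) block every other candidate edge, and conclude $|M| = m-1 < m$, a contradiction. In fact your version is slightly more complete than the paper's, which only states that the leaves of $S_m(1)$ and $S_m(4)$ become unsaturable and leaves implicit the final step you spell out --- that the two remaining central vertices $v_1$ and $x_1$ (the paper's $r_1$) have no admissible partner left and are non-adjacent to each other.
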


\begin{proof}
	It is eay to see that $|M'| = m-1$. Now, suppose that $M' \subset M$, then $u_i, w_i$ are saturated for all $i \in [2,m]$. Thus, no vertex $v_i \in S_m(1)$ and $r_i \in S_m(4)$ is saturable, for $i \in [2,m]$, which implies that $im(G_{m,4})=m-1$ and thus, a contradiction.
\end{proof}

Nex we consider the main theorem.

\begin{theorem}\label{thm6}
	Let $S_m(1), S_m(2), S_m(3)$ and $S_m(4)$ be the factor star graphs of the stacked-book graph $G_{m,4}$. Then, $im(G_{m,4})=m$.
\end{theorem}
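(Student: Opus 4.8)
The plan is to establish the two inequalities $im(G_{m,4}) \ge m$ and $im(G_{m,4}) \le m$ separately. Throughout I write $v_i, u_i, w_i, r_i$ for the index-$i$ vertices of the factor stars $S_m(1), S_m(2), S_m(3), S_m(4)$, with $i = 1$ the central index and $i \in [2,m]$ the leaves, and I call $F_i = v_i \rightarrow u_i \rightarrow w_i \rightarrow r_i$ the $i$-th \emph{fiber}.

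For the lower bound I would exhibit an explicit induced matching of size $m$. Take
\[ M_0 = \left\{ v_i u_i : i \in [2,m] \right\} \cup \left\{ w_1 r_1 \right\}, \]
which consists of the $m-1$ leaf spine edges in the $S_m(1)$--$S_m(2)$ band together with the single central spine edge in the $S_m(3)$--$S_m(4)$ band, so $|M_0| = m$. I would then verify the induced condition using Lemma \ref{lem2}: for $i \neq j$ the edges $v_i u_i$ and $v_j u_j$ have $d(v_i,u_j) = 3$ and $d(u_i,v_j) \ge 2$, while $w_1 r_1$ is compatible with each $v_i u_i$ because $d(u_i, w_1) = 2$ and every other cross pair of endpoints is at distance $3$. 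Hence $M_0$ is an induced matching and $im(G_{m,4}) \ge m$.

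For the upper bound I would classify every edge of $G_{m,4}$ as either a \emph{spine edge} (lying inside a single fiber $F_i$) or a \emph{star edge} (joining a central vertex to a leaf inside one $S_m(t)$). Each fiber is a copy of $P_4$, so by Theorem \ref{thm1} it carries at most one spine edge of any induced matching; since there are $m$ fibers, $M$ contains at most $m$ spine edges. Consequently, if $M$ uses no star edge then $|M| \le m$ at once. The remaining task is to show that star edges yield no surplus. If a star edge $c_t \ell \in M$ with $c_t$ the central vertex of $S_m(t)$, then $c_t$ is saturated, and since $c_t$ is adjacent to every leaf of $S_m(t)$ no other vertex of band $t$ may be saturated (otherwise the induced condition fails by Theorem \ref{thm4}); hence \emph{no} spine edge touching band $t$ can lie in $M$.

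I would finish by a short case analysis on the band of the star edge, where by the symmetry $S_m(1) \leftrightarrow S_m(4)$ and $S_m(2) \leftrightarrow S_m(3)$ only two cases arise. A star edge in $S_m(2)$ kills the band-$2$ spine edges $v_j u_j$ and $u_j w_j$ for all $j$ and, through the spine adjacency $u_1 \sim w_1$ and $u_i \sim w_i$, also blocks $w_1 r_1$ and $w_i r_i$, leaving at most $m-2$ surviving fiber edges, so the total is at most $m-1$; here Theorem \ref{thm5} and Lemma \ref{lem4} control the middle band. The delicate case, and the one I expect to be the main obstacle, is a star edge in $S_m(1)$: it blocks only the single band-$1$ edge per fiber, so naively one fears $m-1$ surviving leaf edges plus the central edge $w_1 r_1$ plus the star edge, giving $m+1$. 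The resolution is that $w_1 r_1$, if present, is adjacent via $w_1 \sim w_j$ and $r_1 \sim r_j$ to every leaf fiber and thus annihilates all remaining leaf contributions, so one cannot keep both the central edge and the leaf edges; carefully bookkeeping this competition shows the count never exceeds $m$. Assembling the cases gives $im(G_{m,4}) \le m$, and together with the lower bound $im(G_{m,4}) = m$.
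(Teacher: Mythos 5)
Your route is genuinely different from the paper's and, where it is carried out, more careful: the paper proves the lower bound with the construction $\{v_1v_k,\ r_1r_k\} \cup \{u_iw_i : i \in [2,m],\ i \neq k\}$ and gets the upper bound by contradiction from $|M| = m+1$ using its saturated-central-vertex lemmas, while you use a fiber decomposition (each $F_i$ is an induced $P_4$, so by Theorem \ref{thm1} it carries at most one edge of $M$) together with the alternative extremal matching $\{v_iu_i : i \in [2,m]\} \cup \{w_1r_1\}$. Your lower-bound verification is correct, and the edge classification, the bound of $m$ for star-edge-free matchings, and the band-blocking principle for star edges are all sound.

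The gap is in the final case analysis. You case on the band of \emph{the} star edge and assert that by symmetry only two cases arise, but an induced matching may contain \emph{two} star edges at once: bands $\{1,3\}$, $\{1,4\}$ and $\{2,4\}$ are mutually compatible (only adjacent bands exclude each other, via the central spine edge $c_tc_{t+1}$). The pair $\{1,4\}$ is not a marginal case --- it is exactly the extremal configuration, as the paper's own optimal matching above shows --- so it cannot be absorbed silently into your single-star-edge cases. Concretely, in your ``delicate'' band-$1$ case the count ``one star edge plus at most one spine edge per leaf fiber'' becomes $2 + (m-1) = m+1$ once a band-$4$ star edge $r_1r_l$ is also allowed, and your stated resolution (the competition between $w_1r_1$ and the leaf fibers) does not touch this, since such a matching need not contain $w_1r_1$ at all. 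The missing observation is that a star edge kills the entire fiber of its leaf, not just its own band: $v_1v_k$ blocks $v_ku_k$ and $u_kw_k$, while $r_1r_l$ blocks $w_lr_l$ and $u_lw_l$, and fiber $1$ dies as well; so star edges in bands $1$ and $4$ leave at most $m-2$ usable fibers, each contributing at most one edge (in fact only $u_jw_j$ survives), for a total of at most $2 + (m-2) = m$, with equality exactly when $k = l$. The pairs $\{1,3\}$ and $\{2,4\}$ block every fiber and give $|M| \le 2$. Adding this multi-star-edge accounting closes the proof; without it the upper bound is not established in the one configuration where it is tight.
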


\begin{proof}
	By Lemma \ref{lem4}, suppose that at least some edge in \\$M' = \left\lbrace u_iw_i: i[2,m]; \; u_i\in S_m(2), w_i \in S_m(3) \right\rbrace $ is not in $M$. Suppose therefore that $u_kw_k \in M$. Then for $v_k\in S_m(1)$, and $r_k \in S_m(4)$, $v_1v_k, r_1r_k \in M$, where $v_1$ and $r_1$ are the central vertices of $S_m(1)$ and $S_m(4)$ respectively. Thus, $im(G_{m,4}) \geq m$. Conversely, suppose that $im(G_{m,4})=m+1$. Now, let $u_1, w_1$ be the central vertices of $S_m(2)$ and $S_m(3)$ respectively. Suppose that one of $u_i,w_i$, say $u_i$ is saturated such that $u_1u_i \in M$. Then, from earlier result, no edge in the subgraph of $G_{m,4}$ induced by $S_m(1)$, $S_m(2)$ and $S_m(3)$ is contained in $M$. Likewise, if $w_1w_i \in M,$ then all other vertices on the subgraph of $G_{m,4}$ induced by $S_m(2)$, $S_m(3)$ and $S_m(4)$  are unstaurable. If any of the pendant of $S_m(2)$ and $S_m(3)$ is in $M$, then $M=2$. Now, note as well that if $u_1w_1 \in M$, then by the distances of $u_1$ and $w_1$ to the rest of vertices on $S_m(1), S_m(2), S_m(3)$ and $S_m(4)$, only $u_1w_1$ will be in $M$. Thus for optimal $M$, some members of $M''= \left\lbrace v_iu_i; i \in [2,m]\right\rbrace$ or $M'''= \left\lbrace w_ir_i: i \in[2,m] \right\rbrace $ will have to be in $M$. 
	
	Now clearly, it can be see that $|M' \cup M''|=2(m-1)$ and only $m-1$ members of $M' \cup M''$ can be in $M$. Based on this observable fact, at least there will exist a $w_i \in S_m(3)$ that is not saturable. Thus, there exist a saturable vertex $r_i \in S_m(4)$, such that $r_1r_i \in M$. By earlier result, there is no other pendant of $S_m(4)$ that is in $M$. Thus, $im(G_{m,4}) < m+1$ and hence a contradiction. Therefore, $im(G_{m,4}) \leq m$ and the claim follows.

\end{proof}
Now we consider the case of $G_{m,5}$. We shall need some new results to aid the proof.

\begin{lemma}\label{lem5}
	Suppose that $w_1 \in S_m(3)$ is the central vertex of $S_m(3)$, where $\left\lbrace S_m(i): i \in [1,5]\right\rbrace $ is the set of factor stars of $G_{m,5}$. If $w_1$ is saturated, then for $M$ of $G_{m,5}$, $|M| \leq 2m-3$. 
\end{lemma}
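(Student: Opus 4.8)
The plan is to condition on the way in which the central vertex $w_1$ of $S_m(3)$ is saturated. Since $M$ is a matching, exactly one edge of $M$ is incident with $w_1$, and in $G_{m,5}$ this edge must be of one of three types: (a) a pendant edge $w_1w_k$ with $w_k$ a leaf of $S_m(3)$; (b) the vertical edge $w_1u_1$ joining the centres of $S_m(3)$ and $S_m(2)$; or (c) the vertical edge $w_1r_1$ joining the centres of $S_m(3)$ and $S_m(4)$. In each case I would delete the closed neighbourhoods of the two endpoints of this edge, since by the definition of an induced matching (cf. Lemma~\ref{lem2}) no further edge of $M$ may meet a neighbour of a saturated endpoint, and then bound the induced matching number of the residual graph, finally adding $1$ for the fixed edge.

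Case (a) is the decisive one. Writing $s_1$ for the centre of $S_m(5)$ and $r_is_i$, $v_iu_i$ for the relevant vertical edges, deleting $N[w_1]\cup N[w_k]$ removes all of $S_m(3)$ together with the centres $u_1,r_1$ and the two leaves $u_k,r_k$. Because level $3$ is now entirely gone and edges of $G_{m,5}$ only join consecutive levels, the residual graph splits into two disjoint halves: one supported on $S_m(1)\cup S_m(2)$ and its mirror image on $S_m(4)\cup S_m(5)$. Each half is a caterpillar, namely a central vertex ($v_1$, resp. $s_1$) joined to its leaves $v_i$ (resp. $s_i$), each of which — except for $i=k$ — carries a single pendant $u_i$ (resp. $r_i$), since a surviving leaf of $S_m(2)$ (resp. $S_m(4)$) has its only surviving neighbour in $S_m(1)$ (resp. $S_m(5)$). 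The key computation is that such a caterpillar has induced matching number exactly $m-2$: the $m-2$ outer edges $v_iu_i$, $i\in[2,m]\setminus\{k\}$, form an induced matching, and one cannot do better, because this mirrors Theorem~\ref{thm4} — the centre $v_1$ with its leaves is a star, so at most one of its incident edges lies in $M$, and selecting such an edge is adjacent to every leaf $v_i$ and hence forbids every leg at once, leaving only one edge. Thus the two halves contribute at most $2(m-2)$, whence $|M|\le 1+2(m-2)=2m-3$.

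For cases (b) and (c) the bound is far weaker and essentially free. Taking (b), removing $N[w_1]\cup N[u_1]$ destroys all of $S_m(2)$ and $S_m(3)$ together with the centres $v_1,r_1$; every leaf of $S_m(1)$ is thereby isolated, its only former neighbours being $v_1$ and the deleted $u_i$, so levels $1$–$3$ contribute nothing and only the caterpillar on $S_m(4)\cup S_m(5)$ survives, now with all legs $s_1$–$s_i$–$r_i$ intact and induced matching number $m-1$. Hence $|M|\le 1+(m-1)=m\le 2m-3$ in the relevant range $m\ge 3$; case (c) is symmetric under reversing the path $P_5$. Combining the three cases yields $|M|\le 2m-3$.

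The main obstacle is the caterpillar computation underlying case (a): I must establish the \emph{upper} bound $m-2$ cleanly, i.e. that no induced matching of the caterpillar exceeds the $m-2$ outer leg-edges. This rests entirely on the high degree of the centre $v_1$ (it is adjacent to all leaves), so that committing any edge at $v_1$ collapses the whole matching to a single edge via the same mechanism as Theorem~\ref{thm4}. The remaining bookkeeping — confirming that the deletion of level $3$ genuinely disconnects the two halves and that no surviving edge runs between them — is routine from the Cartesian-product structure of $G_{m,5}=S_m\,\Box\,P_5$.
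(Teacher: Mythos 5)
Your proposal is correct and takes essentially the same route as the paper: the identical three-way case split on the edge saturating $w_1$ (pendant edge $w_1w_k$ versus the central edges $w_1u_1$ and $w_1r_1$), with the pendant case giving $1+2(m-2)=2m-3$ via the surviving $v_iu_i$ and $r_it_i$ edges, and the central cases giving $m\leq 2m-3$ for $m\geq 3$. If anything, your closed-neighbourhood deletion and explicit caterpillar bound $\max(1,m-2)=m-2$ is a cleaner rendering of the paper's argument, which instead asserts outright that $v_1v_i, t_1t_i \notin M$ rather than bounding the maximum over both alternatives as you do.
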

Suppose that $w_1$ is the central vertex of $S_m(3)$ and it is saturated. Then one of the $w_1w_k, u_1w_1$ and $w_1r_1$ belongs to $M$ where $u_1, r_1$ are central vertices of $S_m(2)$ and $S_m(4)$respectively. Suppose that $w_1w_k \in M$, where $k \leq m$. Now for all $i \in [2,m], i \neq k$, $w_i \in S_m(3)$ is unsaturable by earlier results. Thus members of $\left\lbrace u_iw_i:i \in [2,m] \right\rbrace $ and $\left\lbrace w_ir_i: r_i \in S_M(4), i \in [2,m] \right\rbrace $  do not belong in $M$. Also it clear to see that both edges $v_ku_k, r_kt_k \notin M$, where $t_k \in S_m(5)$. Now, from earlier technique, it can be deduced that $v_1v_i, t_1t_i  \notin M$ for all $i \in [2,m]$. Thus, only $E'=\left\lbrace v_iu_i: i \in [2,m], i \neq k \right\rbrace $ and $E'' = \left\lbrace r_it_i: i \in [2,m], i \neq k \right\rbrace $ can be in $M$. Clearly, $|E' \cup E''|=2(m-2)$. Thus $|M|=2m-3$. Also, if $u_1w_i \in M$, it can be seen by following the definitions of induced matching that no other edges in the subgraph of $G_{m,5}$ induced by $S_m(1), S_m(2)$ and $S_m(3)$ is a member of $M$ and from earlier results, only $m-1$ edges of the subgraph of $G_{m,5}$ induced by $S_m(3), S_m(4)$ and $S_m(5)$ can be in $M$.  Thus, $M$ consists of at most $m$ edges, which is not more that $2m-3$, since $m \geq 3$. 
\begin{lemma}\label{lem6}
	Suppose that $im(G_{m,5}) \geq 2(m-1)$. Then $u_1, w_1$ and $r_1$, the central vertices of $S_m(2), S_m(3)$ and $S_m(4)$ respectively are unsaturated. 
\end{lemma}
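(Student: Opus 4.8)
The plan is to prove the contrapositive: assuming that at least one of the three central vertices $u_1 \in S_m(2)$, $w_1 \in S_m(3)$, or $r_1 \in S_m(4)$ is saturated, I would show that $|M| \leq 2(m-1)-1 = 2m-3$, which contradicts the hypothesis $im(G_{m,5}) \geq 2(m-1)$. The case in which the \emph{middle} central vertex $w_1$ is saturated is already handled completely by Lemma \ref{lem5}, which gives $|M| \leq 2m-3$; since $2m-3 < 2(m-1)$, this case is immediate. So the remaining work is to treat the cases where $u_1$ or $r_1$ is saturated while $w_1$ is not, and the symmetry of $G_{m,5}$ under the reflection fixing $S_m(3)$ means it suffices to analyze the $u_1$ case and then invoke symmetry for $r_1$.

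For the $u_1$ case, I would split according to which edge at $u_1$ lies in $M$. If $u_1 u_k \in M$ for some leaf $u_k$ of $S_m(2)$, then by Theorem \ref{thm4} no other edge of $S_m(2)$ is in $M$, and the distance-based reasoning from the proof of Lemma \ref{lem5} (vertices $v_k, w_k$ become unsaturable, and the central-vertex arguments kill the remaining spine edges) should force all candidate edges touching $S_m(2)$ out of $M$; what survives are the independent-matching edges living in the $S_m(3)$--$S_m(4)$--$S_m(5)$ block, which by the $G_{m,4}$-type bound from Theorem \ref{thm6} and Lemma \ref{lem2} contribute at most $m$ edges, well below $2m-3$. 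If instead a \emph{spine} edge at $u_1$ is in $M$, i.e. $v_1 u_1 \in M$ or $u_1 w_1 \in M$, then the latter is excluded since it makes $w_1$ saturated (already covered), and for $v_1 u_1 \in M$ the same saturation cascade shows the first two star-layers contribute only this one edge, leaving the $S_m(4)$--$S_m(5)$ end to add at most $m-1$ further edges, again giving a total strictly less than $2(m-1)$.

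Assembling the bounds, every case yields $|M| \leq 2m-3 < 2(m-1)$, so whenever $im(G_{m,5}) \geq 2(m-1)$ none of $u_1, w_1, r_1$ can be saturated, which is exactly the claim. The main obstacle I anticipate is the careful bookkeeping in the $u_1 u_k \in M$ sub-case: one must verify that saturating a leaf of $S_m(2)$ genuinely propagates far enough to forbid all of $\{v_i u_i\}$ and $\{u_i w_i\}$ simultaneously, rather than only the edges incident to the specific index $k$. This is precisely the kind of distance-propagation argument encoded in Lemma \ref{lem2} (the $d \geq 3$ / $d \geq 2$ condition) together with the diameter constraint $diam(G_{m,5})=5$, and the delicate point is ensuring the leaf saturation does not leave room for a second independent edge inside the same $S_m(2)$ layer while still permitting the count in the far block to reach its maximum.
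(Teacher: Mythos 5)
Your overall strategy is sound and is, in substance, exactly what the paper's own proof gestures at: the paper's entire proof is the sentence that the claim ``follows from the last theorem and an earlier result,'' i.e.\ Lemma \ref{lem5} disposes of $w_1$ and the earlier saturation-cascade arguments dispose of $u_1$ and $r_1$. Your contrapositive framing, the reduction of the $r_1$ case to the $u_1$ case via the reflection symmetry of $G_{m,5}$, and the exhaustive split over the three kinds of edges at $u_1$ (a leaf edge $u_1u_k$, the spine edge $v_1u_1$, and the spine edge $u_1w_1$, the last being absorbed into the $w_1$ case) form a complete and correct skeleton, considerably more explicit than anything in the paper.

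There is, however, one step that fails as written. In the $u_1u_k \in M$ sub-case you bound the surviving edges by the ``$G_{m,4}$-type bound from Theorem \ref{thm6},'' giving at most $m$ further edges, which you describe as ``well below $2m-3$.'' For $m=3$ this is false: it yields only $|M| \le m+1 = 4 = 2(m-1)$, which does not contradict $im(G_{m,5}) \ge 2(m-1)$, and indeed $m+1 > 2m-3$ there. The repair is to observe that once $u_1u_k \in M$, every edge meeting $S_m(1)$ or $S_m(2)$ other than $u_1u_k$ itself has an endpoint adjacent to $u_1$ or to $u_k$, so none of them can lie in $M$; hence $M \setminus \{u_1u_k\}$ lies entirely in the induced copy of $G_{m,3}$ on $S_m(3) \cup S_m(4) \cup S_m(5)$, and Theorem \ref{thm5} gives $|M| \le 1 + (m-1) = m \le 2m-3$ for all $m \ge 3$. (The same refinement applies to your $v_1u_1$ case: the surviving candidates include the edges $w_ir_i$, which live in that three-star block rather than in the $S_m(4)$--$S_m(5)$ end, but your count of at most $m-1$ further edges is still correct via the same $G_{m,3}$ containment.) With that substitution your argument is complete.
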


\begin{proof}
	Proof follows from last theorem and an earlier result. 
\end{proof}

Now we proceed to the probe the induced matching of $G_{m,5}$.

\begin{theorem}
	Let $G_{m,5}$ be a stacked-book graph. Then, $im(G_{m,5}) = 2(m-1)$.
\end{theorem}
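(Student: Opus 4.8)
The plan is to prove the two inequalities $im(G_{m,5})\ge 2(m-1)$ and $im(G_{m,5})\le 2(m-1)$ separately, keeping the paper's labelling: $v_1,u_1,w_1,r_1,t_1$ are the central vertices of $S_m(1),\dots,S_m(5)$, the leaves are $v_i,u_i,w_i,r_i,t_i$ for $i\in[2,m]$, and for each leaf index $i$ the five vertices $v_i,u_i,w_i,r_i,t_i$ induce a column path $P_5(i)=v_i\rightarrow u_i\rightarrow w_i\rightarrow r_i\rightarrow t_i$. Every possible member of $M$ is either a leaf vertical lying inside one such column or a star edge at one of the five centres, a bookkeeping fact I would record first.

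For the lower bound I would exhibit the explicit set $M=\left\lbrace v_iu_i:i\in[2,m]\right\rbrace\cup\left\lbrace r_it_i:i\in[2,m]\right\rbrace$, which has $2(m-1)$ edges, and verify it is induced via Lemma \ref{lem2}. Two edges of the same family, say $v_iu_i$ and $v_ju_j$ with $i\neq j$, satisfy $d(v_i,u_j)=3$ and $d(u_i,v_j)=3$; one edge from each family, say $v_iu_i$ and $r_jt_j$, satisfies $d(v_i,t_j)\ge 4$ and $d(u_i,r_j)\ge 2$. In every case the distance hypotheses of Lemma \ref{lem2} are met, so $M$ is an induced matching and $im(G_{m,5})\ge 2(m-1)$.

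For the upper bound I would argue by contradiction, assuming $|M|>2(m-1)$ for a maximum induced matching $M$. By Lemma \ref{lem6} the inner centres $u_1,w_1,r_1$ are unsaturated, so I only need to control the two outer centres. If $v_1$ and $t_1$ are also unsaturated, then all five centres are unsaturated and every edge of $M$ is a leaf vertical lying inside a single column; since restricting an induced matching to the induced subgraph $P_5(i)$ yields an induced matching of $P_5$, Theorem \ref{thm1} gives $|M\cap E(P_5(i))|\le im(P_5)=2$, and summing over the $m-1$ columns forces $|M|\le 2(m-1)$, a contradiction. Otherwise some outer centre, say $v_1$, is saturated; because $u_1$ is unsaturated the only option is $v_1v_k\in M$ for a leaf $v_k$, and then every other layer-$1$ leaf $v_j$ is adjacent to $v_1$ while $u_k$ is adjacent to $v_k$, so no edge $v_iu_i$ can belong to $M$. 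Consequently $M\setminus\{v_1v_k\}$ is an induced matching of the sub-stacked-book on $S_m(2),\dots,S_m(5)$, which is isomorphic to $G_{m,4}$; by Theorem \ref{thm6} it has at most $m$ edges, whence $|M|\le m+1\le 2(m-1)$ for $m\ge 3$, again a contradiction (the case of $t_1$ saturated being symmetric). Combining the two bounds gives $im(G_{m,5})=2(m-1)$.

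The main obstacle is that Lemma \ref{lem6} pins down only the three inner centres, so the genuine work lies in disposing of the two outer centres $v_1,t_1$. The key step I would stress is that saturating an outer centre via a star edge eliminates an entire vertical layer of $M$ and collapses the count onto $G_{m,4}$, where the already established value $im(G_{m,4})=m$ of Theorem \ref{thm6} dominates $2(m-1)$ precisely because $m\ge 3$; I would double-check that this inequality, rather than the column argument, is what actually closes the saturated-centre case.
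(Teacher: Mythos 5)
Your proof is correct, and its skeleton matches the paper's: the lower bound comes from the two-edges-per-column construction $\{v_iu_i : i\in[2,m]\}\cup\{r_it_i : i\in[2,m]\}$, and the upper bound starts from Lemma \ref{lem6} to rule out saturation of $u_1,w_1,r_1$. Where you genuinely diverge is in the handling of the outer layers. The paper does not reason about whether $v_1,t_1$ are saturated; it splits on whether $M$ contains a pendant edge of $S_m(1)$ and of $S_m(5)$: if one from each, it counts at most one edge per column plus the two pendant edges ($|M|\le m+1$); if neither, at most two per column ($|M|\le 2(m-1)$). That split silently omits the case where exactly one of the two outer stars contributes a pendant edge. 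Your argument covers it uniformly: once $v_1v_k\in M$ (the only possibility when $v_1$ is saturated, since $u_1$ is not), every edge $v_iu_i$ is excluded because $v_iv_1\in E(G_{m,5})$, so $M\setminus\{v_1v_k\}$ lies in the induced copy of $G_{m,4}$ on $S_m(2),\dots,S_m(5)$, and Theorem \ref{thm6} caps it at $m$, giving $|M|\le m+1\le 2(m-1)$ for $m\ge 3$ irrespective of what happens at $t_1$. So the reduction to $G_{m,4}$ buys completeness and a cleaner case structure, at the price of invoking the (harder) Theorem \ref{thm6}, which the paper's purely local column count avoids. Your lower-bound step is also tighter than the paper's: the paper infers $im(G_{m,5})\ge 2(m-1)$ from $im(P_5)=2$ on each of the $m-1$ columns without checking that the chosen edges in different columns are mutually compatible, a verification you supply explicitly via Lemma \ref{lem2}.
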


\begin{proof}
	From the last results, we see that if $u_1, w_1, r_1 $ are unsaturated, then $|M| \geq 2m-3$. Now we show that $im(G_{m,5}) \geq 2(m-1)$. Note that there exists a path $P_5(i) = v_i \rightarrow u_i \rightarrow w_i \rightarrow r_i \rightarrow t_i$, for all $i \in [2,m]$. Therefore, there are $m-1$ such paths in $G_{m,5}$. From earlier results, $im(P_5)=2$. Thus, $im(G_{m,5})\geq 2(m-1)$. Conversely, $u_1, w_1, r_1$ are established not to be saturated for the claim to hold. The edges in $E(G_{m,5})$ left to be members of $M$ the pendants of $S_m(1)$ and $S_m(5)$ and the paths $P_5(i)$ defined earlier. Suppose that a pendant each from $S_m(1)$ and $S_m(5)$ belong to $M$, then by definition of induced matching, at most one edge on each of the paths $P_5(i)$ can be a member of $M$. Thus $|M|=m+1$. The only alternative is if no pendant of $S_m(1)$ and $S_m(5)$ is a member of $M$. Thus, at most two edges on each member of $P_5(i)$ will be in $M$. Thus, $|M| \leq 2(m-1)$ and so, $im(G_{m,5})=2(m-1)$.
\end{proof}

Now we generalize the results. 

\begin{theorem}
	Let $G_{m,n}$ be a stacked-book graph such that $n$ is even. Then
		\begin{center}
		
		$im(G_{m,n}) \geq \left\{
		\begin{array}{ll}
			m\lceil \frac{n}{4} \rceil -1  &  \mbox{if} \;\; n \equiv 2 \mod 4; \\
			\frac{mn}{4} & \mbox{if} \;\; n \equiv 0 \mod 4.

		\end{array}
		\right.$
		
	\end{center}
	
\end{theorem}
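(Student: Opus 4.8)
The plan is to establish the inequality purely by construction: since the statement asserts only a lower bound, it suffices to exhibit, for each even $n$, an explicit induced matching of the claimed size.

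First I would fix notation consistent with the earlier sections. Write $v_1(j)$ for the central vertex of $S_m(j)$ and $v_2(j),\dots,v_m(j)$ for its leaves, $j\in[1,n]$. Then $E(G_{m,n})$ consists of the star edges $v_1(j)v_i(j)$ ($i\in[2,m]$) together with the spine edges $v_i(j)v_i(j+1)$ ($i\in[1,m]$, $j\in[1,n-1]$). The one structural fact driving everything is the adjacency rule: two vertices in the same column are adjacent if and only if one of them is central, while two vertices in distinct columns are adjacent if and only if they carry the same index $i$ and lie in consecutive columns. In particular a leaf $v_i(j)$ with $i\ge 2$ and a central vertex $v_1(j')$ are adjacent only when $j=j'$. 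Next I would group the columns into the $p:=n/2$ consecutive pairs, pair $s$ occupying columns $2s-1,2s$, and for each pair define the leaf family $L_s=\{v_i(2s-1)v_i(2s):i\in[2,m]\}$ (of size $m-1$) and the central edge $C_s=\{v_1(2s-1)v_1(2s)\}$ (of size $1$). The induced matching I propose is
$$ M \;=\; \Big(\bigcup_{s\ \text{odd}} L_s\Big)\ \cup\ \Big(\bigcup_{s\ \text{even}} C_s\Big), \qquad s\in[1,p], $$
that is, alternate leaf-pairs and central-pairs, beginning with a leaf-pair.

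The crux — and the only real content — will be checking that $M$ is \emph{induced}, not merely a matching. Within a single $L_s$ the endpoints are leaves in distinct rows, between which $G$ has no edges, and $C_s$ is a single edge; so each pair contributes an induced piece. Across pairs, only \emph{adjacent} pairs can interact, since the columns of non-adjacent pairs differ by at least two and the spine has no such edges. At the boundary between columns $2s$ and $2s+1$ a leaf-pair always meets a central-pair, so the only candidate adjacent endpoints are a leaf $v_i(2s)$ and the central vertex $v_1(2s+1)$ (or symmetrically $v_1(2s)$ and $v_i(2s+1)$); these have different indices and lie in different columns, hence are non-adjacent. Therefore no edge of $G$ joins the endpoints of two distinct chosen edges, and $M$ is an induced matching. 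I would also remark that this same obstruction is exactly what forces the alternation: two adjacent leaf-pairs would be joined by $v_i(2s)v_i(2s+1)$ and two adjacent central-pairs by $v_1(2s)v_1(2s+1)$, so neither $L_sL_{s+1}$ nor $C_sC_{s+1}$ is admissible.

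Finally I would count $|M|$ by the parity of $p=n/2$. If $n\equiv 0\pmod 4$ then $p$ is even, giving $p/2$ leaf-pairs and $p/2$ central-pairs, so $|M|=\tfrac{p}{2}(m-1)+\tfrac{p}{2}=\tfrac{p}{2}m=\tfrac{mn}{4}$. If $n\equiv 2\pmod 4$ then $p$ is odd, and beginning and ending with a leaf-pair yields $(p+1)/2$ leaf-pairs and $(p-1)/2$ central-pairs; writing $n=4t+2$ so that $p=2t+1$, this gives $|M|=(t+1)(m-1)+t=m(t+1)-1=m\lceil n/4\rceil-1$. These are precisely the two claimed values, so the bound follows. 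The boundary verification in the previous paragraph is the step I expect to require the most care; the rest is adjacency bookkeeping and an elementary parity count.
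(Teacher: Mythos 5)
Your proposal is correct, and in substance it is the argument the paper only gestures at: the paper's entire proof is one sentence asserting that the bounds ``follow from combining the results earlier proved'' for $G_{m,2}$ and $G_{m,4}$, i.e.\ the same column-block decomposition you use, with the arithmetic $\frac{n}{4}\cdot m=\frac{mn}{4}$ and $\frac{n-2}{4}\cdot m+(m-1)=m\lceil\frac{n}{4}\rceil-1$ left implicit. What the paper never addresses is exactly what you single out as the crux: when induced matchings of consecutive blocks are concatenated, saturated vertices in the last column of one block and the first column of the next may be joined by spine edges, so the union need not be induced. This is not a vacuous worry --- the paper's own $G_{m,4}$ construction (middle spine edges $u_iw_i$ together with star edges $v_1v_k$ and $r_1r_k$ in the two end columns) does \emph{not} tile, since the saturated central vertices of column $4$ of one block and column $5$ of the next are adjacent, as are the saturated leaves $v_k$ in those columns. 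Your alternating pattern of leaf-pairs $L_s$ and central-pairs $C_s$ is designed precisely so that every block boundary places a saturated-leaf column against a saturated-center column, and your check that $v_i(2s)$ with $i\geq 2$ is never adjacent to $v_1(2s+1)$ is the step that turns the paper's one-liner into a proof. The parity counts in both congruence classes are correct, so the bound is established; your write-up, unlike the paper's, is self-contained and sound.
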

\begin{proof}
	the claims follow from combining the results earlier proved where n are even numbers.
\end{proof}
\begin{theorem}
	Let $G_{m,n}$ be stacked-book graph with $n$ odd. Then
	
	\begin{center}
		
		$im(G_{m,n}) \geq \left\{
		\begin{array}{ll}
			m\lfloor \frac{n}{4} \rfloor +2  &  \mbox{if} \;\; n \equiv 3 \mod 4; \\
			\frac{mn+3m-8}{4} & \mbox{if} \;\; n \equiv 1 \mod 4.

		\end{array}
		\right.$
		
	\end{center}
\end{theorem}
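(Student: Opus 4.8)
The plan is to prove both bounds by decomposing the spine path $P_n$ into consecutive sub-stacked-books, building an explicit induced matching on each block out of the exact values already established for $G_{m,3}$, $G_{m,4}$ and $G_{m,5}$, and then taking the union. The structural fact that makes this feasible is that two leaves belonging to different leaf columns are never adjacent (in each $S_m(j)$ a leaf meets only the centre), so vertical leaf edges lying in distinct columns can never obstruct one another; the only obstructions are internal to a single column or occur across a block interface.

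For $n\equiv 1\pmod 4$ I would write $n=4k+1$ and split the $n$ factor stars into $k-1$ blocks of four followed by one block of five, namely the copies $[1,4],[5,8],\dots,[4k-7,4k-4]$ and the tail $[4k-3,4k+1]$. On each four-star block Theorem~\ref{thm6} supplies an induced matching of size $m$, and on the terminal five-star block the theorem $im(G_{m,5})=2(m-1)$ supplies one of size $2m-2$. Granting that the union is again an induced matching, its cardinality is $(k-1)m+2(m-1)=m(k+1)-2=\tfrac{mn+3m-8}{4}$, which is the claimed bound.

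For $n\equiv 3\pmod 4$ I would write $n=4k+3$ and split into $k$ blocks of four, $[1,4],\dots,[4k-3,4k]$, followed by a tail of three stars $[4k+1,4k+3]$. The four-star blocks again contribute $mk$ edges. On the three-star tail I would not try to realise the full value $im(G_{m,3})=m-1$ from Theorem~\ref{thm5}, but only exhibit two mutually non-interfering vertical leaf edges $v_i^{(4k+2)}v_i^{(4k+3)}$ in two distinct leaf columns, placed at the free end of the tail away from the preceding block. Adjoining these yields $mk+2=m\lfloor n/4\rfloor+2$; note that since $m\ge 3$ this is in fact weaker than the count $mk+(m-1)$ that a clean join of a full $G_{m,3}$ tail would give, so the stated inequality is comfortable.

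The heart of the argument, and the step I expect to be the main obstacle, is verifying that the union over blocks is genuinely an induced matching, i.e.\ controlling the interaction across each interface between copy $4t$ and copy $4t+1$. The only edges of $G_{m,n}$ crossing such an interface are the single spine edge joining the two central vertices and the $m-1$ vertical edges joining corresponding leaves; by Lemma~\ref{lem2} a crossing edge is harmless precisely when it is not incident to two saturated endpoints. I would therefore choose each block's internal matching so that, at every interface, both the central vertex and all leaves of the shared boundary copy are left unsaturated: concretely, place the block's central (``page'') edge in an interior star and keep the vertical leaf edges off the boundary copy, reserving one unsaturated buffer copy at each join. Exhibiting such a choice simultaneously at all $k$ (resp.\ $k-1$) interfaces, and confirming that the advertised number of edges survives after the buffers are reserved, is exactly the bookkeeping that fixes the additive constants $+2$ and $-2$ in the two cases; the per-block counts themselves are immediate from the cited theorems, so essentially all of the work lies in this interface case analysis.
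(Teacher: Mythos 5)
Your decomposition and arithmetic are correct ($(k-1)m+2(m-1)=\frac{mn+3m-8}{4}$ for $n=4k+1$, and $mk+2$ for $n=4k+3$), and you have correctly located where the entire difficulty lives: the paper gives no proof at all of this theorem (its even-$n$ companion is "proved" in one line), so the interface analysis you defer is not bookkeeping --- it \emph{is} the proof. The problem is that the specific interface strategy you propose provably cannot deliver your own per-block counts. You ask that at each join the shared boundary copy be left entirely unsaturated (centre and all leaves). But a size-$m$ induced matching of a four-star block that never touches its end copy would be a size-$m$ induced matching of the $G_{m,3}$ spanned by the remaining three copies, contradicting Theorem \ref{thm5} ($im(G_{m,3})=m-1$); similarly the five-star tail cannot realise $2(m-1)$ while leaving its first copy untouched, since that would place $2(m-1)>m=im(G_{m,4})$ edges in a $G_{m,4}$. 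So every block forced to reserve a buffer drops from $m$ to at most $m-1$ edges, and for $n=4k+1$ the construction yields at most $(k-1)(m-1)+2(m-1)=mk+m-k-1$, which is $k-1$ edges short of the claimed $mk+m-2$; the same deficit appears in the $n\equiv 3$ case. Your plan therefore only works for the smallest values of $n$ and collapses precisely where the theorem has content.

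The repair is to weaken the interface condition from "boundary copy unsaturated" to "no crossing edge has both endpoints saturated," and to exploit the product structure: the only edges crossing a join connect centre to centre and leaf $i$ to leaf $i$. Writing $v_i^{(j)}$ for the $i$th vertex of the $j$th copy inside a block ($i=1$ the centre), give every four-star block the matching $\{v_i^{(1)}v_i^{(2)}: i\in[2,m]\}\cup\{v_1^{(3)}v_1^{(4)}\}$: it has size $m$, is induced, and saturates only leaves on the block's left boundary copy but only the centre on its right boundary copy. Consecutive blocks then mesh with no buffer at all, because across each interface one side saturates only its centre and the other only its leaves, so no crossing edge is doubly saturated. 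Close the $n\equiv 1\pmod 4$ case with the tail matching $\{v_i^{(1)}v_i^{(2)}: i\in[2,m]\}\cup\{v_i^{(4)}v_i^{(5)}: i\in[2,m]\}$ of size $2(m-1)$ (again only leaves saturated on its left boundary), and the $n\equiv 3\pmod 4$ case with the vertical leaf edges between the last two copies (size $m-1\geq 2$, its first copy untouched). The counts then come out exactly as you computed, and this alternating centre/leaf pattern at boundaries, rather than reserved buffer copies, is the missing idea.
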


We have established the lower bound for the the MIM numbers for the stacked-book graphs. From our preliminary work into establishing the tighter bounds, we have reasons to suggest that the results in the last two theorems may coincide with the upper bounds, and thus we come up with the conjectures below.

\begin{conj}
		Let $G_{m,n}$ be a stacked-book graph such that $n$ is even. Then
	\begin{center}
		
		$im(G_{m,n}) = \left\{
		\begin{array}{ll}
			m\lceil \frac{n}{4} \rceil -1  &  \mbox{if} \;\; n \equiv 2 \mod 4; \\
			\frac{mn}{4} & \mbox{if} \;\; n \equiv 0 \mod 4.

		\end{array}
		\right.$
		
	\end{center}
	
\end{conj}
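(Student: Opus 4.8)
The lower bound is already in hand from the preceding theorem, so the entire content of the conjecture is the reverse inequality; the plan is therefore to establish the matching upper bounds $im(G_{m,n}) \le \tfrac{mn}{4}$ for $n \equiv 0 \pmod 4$ and $im(G_{m,n}) \le m\lceil \tfrac{n}{4}\rceil - 1$ for $n \equiv 2 \pmod 4$. There are two natural routes. The first is a vertex-charging argument: since $|V(G_{m,n})| = mn$, the bound $\tfrac{mn}{4}$ would follow if one could assign to each edge of an optimal induced matching $M$ a disjoint ``private'' set of at least four vertices. The second is induction on $n$ in steps of four, peeling off the last four factor stars $S_m(n-3),\dots,S_m(n)$ and arguing, exactly as in the proofs of Theorem \ref{thm6} and the result for $G_{m,5}$, that this block contributes at most $m$ new edges while the rungs crossing the cut contribute nothing extra.

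The first genuine step would be to promote Lemmas \ref{lem1}, \ref{lem3}, \ref{lem5} and \ref{lem6} to the general setting, showing that in an optimal $M$ the interior central vertices are saturated only at isolated places, so that the bulk of $M$ consists of rung edges $\ell_i(j)\ell_i(j+1)$ running along the $m-1$ leaf-paths $L_i=\ell_i(1)\to\cdots\to\ell_i(n)$, with a bounded number of spoke or centre-path corrections concentrated near the two end stars (as in the $+1$ gained in $G_{m,4}$). Granting such a reduction, one would then try to cap the number of simultaneously admissible rungs by $\tfrac{mn}{4}$.

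Here, however, lies the main obstacle, and I expect it to be fatal to the conjecture as stated. Two leaves belonging to distinct leaf-paths are never adjacent in $G_{m,n}$ (they meet only through a common centre, i.e.\ at distance two), so by Lemma \ref{lem2} rungs chosen in different leaf-paths never conflict. Consequently one may select, independently in each of the $m-1$ leaf-paths, a maximum induced matching of $P_n$, which by Theorem \ref{thm1} has $\lceil \tfrac{n-1}{3}\rceil$ edges, and their union is a \emph{valid} induced matching of $G_{m,n}$ of size $(m-1)\lceil \tfrac{n-1}{3}\rceil$ using no central vertex at all. For $n \equiv 0 \pmod 4$ and $n$ not small this already exceeds $\tfrac{mn}{4}$: in $G_{4,8}$ it produces $(m-1)\lceil\tfrac{7}{3}\rceil = 3\cdot 3 = 9$ edges against the conjectured value $\tfrac{mn}{4} = 8$. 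Thus the proposed upper bound is simply false for $m\ge 4$, and no charging or block-induction scheme can recover it.

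The constructive conclusion I would draw is that the block-decomposition lower bound of the previous theorem is not tight: its growth $\tfrac{mn}{4}$ is dominated, for large $m$, by the leaf-path quantity $(m-1)\lceil\tfrac{n-1}{3}\rceil$, which is the true governing term. A realistic revised program is therefore to prove an equality of the shape $im(G_{m,n}) = (m-1)\lceil\tfrac{n-1}{3}\rceil + c(n)$, where $c(n)$ is a small end-correction (consistent with $c=0$ for $n=2,3,5$ and $c=1$ for $n=4$); the hard and still-open part would be the upper bound $im(G_{m,n}) \le (m-1)\lceil\tfrac{n-1}{3}\rceil + c(n)$, for which the leaf-path reduction of the second paragraph, rather than a $\tfrac{mn}{4}$ charging, is the right engine.
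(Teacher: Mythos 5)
There is no proof in the paper to compare yours against: the statement is explicitly advanced as a conjecture, supported only by the exact values proved for $n\le 5$ and the lower-bound theorem immediately preceding it. What you have written is accordingly not a proof but an argument that no proof can exist, i.e.\ a refutation --- and after checking it, your refutation is correct.

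The decisive step is your leaf-path observation, and it is valid. Fix a leaf index $i\in[2,m]$ and write $\ell_i(j)$ for the copy of that leaf in the star $S_m(j)$. The vertices $\ell_i(1),\dots,\ell_i(n)$ induce a path $P_n$ in $G_{m,n}$ with no chords (every edge inside a star uses that star's centre), and for $i\ne i'$ there is \emph{no} edge of $G_{m,n}$ joining the $i$-th leaf path to the $i'$-th leaf path: an edge of $S_m\Box P_n$ either lies inside one star, hence uses the centre, or joins copies of the same vertex of $S_m$ in consecutive stars, hence preserves the leaf index. Therefore the union of arbitrary induced matchings chosen independently in the $m-1$ leaf paths is an induced matching of $G_{m,n}$, and Theorem \ref{thm1} gives
\[
im(G_{m,n})\ \ge\ (m-1)\left\lceil \tfrac{n-1}{3}\right\rceil .
\]
For $m=4$, $n=8$ this yields $9$ (in each leaf path take the edges in positions $1$--$2$, $4$--$5$, $7$--$8$), whereas the conjectured value is $\tfrac{mn}{4}=8$; since the conjecture asserts equality, it fails at $G_{4,8}$, and indeed for every $m\ge 4$ when $n=8$. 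Asymptotically $(m-1)\lceil\tfrac{n-1}{3}\rceil$ grows like $\tfrac{mn}{3}$, so the $n\equiv 2 \bmod 4$ branch also fails (via this construction, first at about $m=5$, $n=14$). Your arithmetic and your consistency check against the proven cases $n\in\{2,3,4,5\}$ are both right; note also that for $m=3$ the conjecture agrees with Theorem \ref{thm3}, which explains how it was extrapolated --- the breakdown is genuinely a phenomenon of stars with at least three leaves. Two editorial remarks: your opening two paragraphs (vertex-charging and block induction) are dead ends made moot by the counterexample and should be cut; and your construction further shows that the lower bounds in the paper's two general theorems are not tight for $m\ge4$, so the paper's suggestion that they coincide with the upper bounds must be withdrawn. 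Your proposed replacement formula $im(G_{m,n})=(m-1)\lceil\tfrac{n-1}{3}\rceil+c(n)$ with a bounded end-correction is consistent with all proven cases, but be clear that it is a new conjecture, not something your argument establishes.
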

\begin{conj}
	Let $G_{m,n}$ be stacked-book graph with $n$ odd. Then
	
	\begin{center}
		
		$im(G_{m,n}) = \left\{
		\begin{array}{ll}
			m\lfloor \frac{n}{4} \rfloor +2  &  \mbox{if} \;\; n \equiv 3 \mod 4; \\
			\frac{mn+3m-8}{4} & \mbox{if} \;\; n \equiv 1 \mod 4.

		\end{array}
		\right.$
		
	\end{center}
\end{conj}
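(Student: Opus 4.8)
The natural strategy is to prove the two inequalities separately. The lower bound $im(G_{m,n}) \ge \cdots$ is exactly the content of the preceding theorem, so the whole task reduces to the matching upper bound $im(G_{m,n}) \le \cdots$ for $n$ odd. The plan would be to generalize the ``interior central vertex'' lemmas (Lemmas \ref{lem3}, \ref{lem5}, \ref{lem6}): first argue that in any optimal induced matching the central vertices of all interior stars $S_m(i)$, $2 \le i \le n-1$, are unsaturated, since saturating such a vertex (by a star edge, or by a spine edge of the central path) forces an entire block of $m-1$ leaves to become unsaturable, a net loss. Once the interior central spine is unsaturated, $G_{m,n}$ would decompose into the $m-1$ vertex-disjoint ``leaf spines'' $P_n(j)=(\ell_j,1)\to(\ell_j,2)\to\cdots\to(\ell_j,n)$ together with bounded contributions from the two end stars, and one would count the induced matching on each $P_n$ via Theorem \ref{thm1} and add the end terms.

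The step I expect to be the main obstacle is precisely this upper bound, and in fact I believe it cannot be carried out, because the asserted equality is false. The leaf spines $P_n(j)$ are pairwise non-adjacent: two distinct leaves $\ell_j,\ell_{j'}$ of $S_m$ are never adjacent in $S_m$, so no edge of $G_{m,n}$ joins a vertex of $P_n(j)$ to a vertex of $P_n(j')$. Hence one may take a maximum induced matching of each $P_n(j)$ independently, and their union is an induced matching of $G_{m,n}$. This yields the uniform lower bound
\[
im(G_{m,n}) \ge (m-1)\left\lceil \tfrac{n-1}{3}\right\rceil,
\]
which is exactly the construction the paper already uses in its proof for $G_{m,5}$ (there it gives $2(m-1)$, in agreement with the corresponding theorem).

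For $n$ odd this construction already exceeds the conjectured value once $m$ is moderately large, so the intended upper bound must fail. Taking $m=5$ and $n=7$ (so $n\equiv 3 \bmod 4$), the formula predicts $im(G_{5,7})=5\lfloor 7/4\rfloor+2 = 7$, whereas the four leaf spines, each a $P_7$ with $im(P_7)=2$, give an induced matching of size $4\cdot 2 = 8 > 7$. An even more elementary contradiction occurs at $n=3$: the formula gives $m\lfloor 3/4\rfloor + 2 = 2$ for every $m$, yet Theorem \ref{thm5} already proves $im(G_{m,3}) = m-1$, which is strictly larger for all $m\ge 4$. Thus no proof of the stated equality can exist as written; the difficulty is not in finding the argument but in the fact that the upper bound is incorrect. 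The true value appears to grow like $(m-1)\lceil (n-1)/3\rceil$ rather than $\tfrac{mn}{4}$, so the conjectured formula would need to be replaced — for instance by the maximum of the block construction and the leaf-spine construction — before any equality could be hoped for.
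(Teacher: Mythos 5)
The statement you were asked to prove is one of the paper's closing conjectures: the paper offers no proof of it (it only proves the weaker lower-bound theorem preceding it), so there is no argument of the author's to compare yours against. What can be assessed is your claim that no proof can exist because the equality is false, and that claim is correct. Your leaf-spine construction is sound: for distinct leaves $\ell_j,\ell_{j'}$ of $S_m$, an edge of $S_m \Box P_n$ joining $(\ell_j,i)$ to $(\ell_{j'},i')$ would require either $\ell_j=\ell_{j'}$, or $i=i'$ together with $\ell_j\ell_{j'}\in E(S_m)$, and neither holds; so the $m-1$ leaf spines are pairwise non-adjacent induced copies of $P_n$, and the union of maximum induced matchings taken on each spine separately is an induced matching of $G_{m,n}$. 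With Theorem \ref{thm1} this gives $im(G_{m,n})\geq (m-1)\lceil\frac{n-1}{3}\rceil$, which is indeed the same construction the paper uses to prove $im(G_{m,5})\geq 2(m-1)$. Your arithmetic then stands: for $m=5$, $n=7$ this yields $8$, exceeding the conjectured value $5\lfloor 7/4\rfloor+2=7$; and at $n=3$ the conjectured value is $2$, contradicting the paper's own Theorem \ref{thm5}, which gives $im(G_{m,3})=m-1\geq 3$ for $m\geq 4$. Either observation alone refutes the conjecture as stated, and the second is an outright internal inconsistency of the paper.

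Two small caveats. First, both of your counterexamples hit the $n\equiv 3 \pmod 4$ branch; this suffices, since the conjecture is a single universally quantified statement, but the $n\equiv 1 \pmod 4$ branch also fails by the same construction at larger parameters, e.g. $m=5$, $n=17$, where the formula gives $\frac{5\cdot 17+3\cdot 5-8}{4}=23$ while the spines give $4\lceil 16/3\rceil = 24$. Second, your closing remark that the true value ``grows like $(m-1)\lceil(n-1)/3\rceil$'' should be read only as a lower bound: constructions that saturate end-star or central vertices can beat the pure spine construction (already $im(G_{m,4})=m>m-1$ in the paper), so the correct formula is plausibly a maximum over several constructions, as you yourself suggest. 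Neither caveat affects your conclusion: the conjectured equality is false for $m\geq 5$ (and already for $m\geq 4$ at $n=3$), so no proof of it can exist.
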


\section{Conclusion}

We have obtained the MIM number of stacked-book graphs $G_{m,n}$ for all $m$ and for $n \in [1,5]$. These results are building blocks for obtaining the lower bounds for the cases where $n \geq 6$. The conjecture at the end of the work suggests that the lower bounds obtained in this work will in fact be equal to the upper bounds if those can be found. It must be noted that finding the lower bounds or the MIM numbers for the complete stacked-book graphs class will take rigorous effort and therefore may worth considering as a new task.

\end{document}